\newtheorem{assumption}{Assumption}
\newtheorem{problem}{Problem}
\newtheorem{lemma}{Lemma}
\newtheorem{remark}{Remark}
\newtheorem{theo}{Theorem}
\newtheorem{coro}{Corollary}
\newenvironment{proof}[1][]{\noindent\textbf{Proof #1:~}}{\hfill$\square$\\}
\newcommand{\A}{\mathcal{A}_{v_{ref}}}
\newcommand{\D}{\mathcal{D}_{v_{ref}}}
\renewcommand{\L}{\mathcal{L}}
\newcommand{\R}{\mathbb{R}}
\renewcommand{\S}{\mathbb{S}}
\DeclareMathOperator*{\col}{col}
\DeclareMathOperator*{\He}{He}
\DeclareMathOperator*{\Sign}{Sign}
\DeclareMathOperator*{\sign}{sign}
\journal{System and Control Letters}
\begin{document}
	
	\begin{frontmatter}
		
		\title{Lyapunov Stability Analysis of a Mass-Spring system subject to Friction}
		\tnotetext[mytitlenote]{This work was supported in part by the ANR project HANDY contract number 18-CE40-0010.}
		
		
		\author[KTH]{Matthieu Barreau\corref{mycorrespondingauthor}}
		\cortext[mycorrespondingauthor]{Corresponding author}
		\ead{barreau@kth.se}
		
		\author[LAAS]{Sophie Tarbouriech}
		\ead{tarbouriech@laas.fr}
		
		\author[LAAS]{Fr\'ed\'eric Gouaisbaut}
		\ead{fgouaisb@laas.fr}
		
		\address[KTH]{Division of Decision and Control Systems, KTH Royal Institute of Technology Stockholm, Sweden.}
		\address[LAAS]{LAAS-CNRS, Universit\'e de Toulouse, CNRS, UPS, Toulouse, France.}
		
		\begin{abstract}
			This paper deals with the stability analysis of a mass-spring system subject to friction using Lyapunov-based arguments. As the described system presents a stick-slip phenomenon, the mass may then periodically sticks to the ground. The objective consists of developing numerically tractable conditions ensuring the global asymptotic stability of the unique equilibrium point. The proposed approach merges two intermediate results: The first one relies on the characterization of an attractor around the origin, to which converges the closed-loop trajectories. The second result assesses the regional asymptotic stability of the equilibrium point by estimating its basin of attraction. The main result relies on conditions allowing to ensure that the attractor issued from the first result is included in the basin of attraction of the origin computed from the second result. An illustrative example draws the interest of the approach.
		\end{abstract}
		
		\begin{keyword}
			Friction, Lyapunov methods, Attractor,
			Regional asymptotic stability, Global asymptotic stability, LMI.
			\MSC[2010] 00-01\sep  99-00
		\end{keyword}
		
	\end{frontmatter}
	
	
	\section{Introduction}  
	
	Friction appears in many mechanical systems such as drill-strings \cite{azar2007drilling,canudasdewit:hal-00394990}, car steering \cite{lozia2002vehicle} or also machine positioning \cite{armstrong1994survey,beerens2019reset}. This is a nonlinear force that is responsible for many undesirable effects such as stick-slip or hunting \cite{armstrong1994survey}. The first challenge was then to propose a model that was generic enough to be easily adapted to a new situation and able to reproduce these phenomena.
	
	Many models of different complexities were investigated in the last century with a relatively good correlation with empirical data. The first model was proposed by Guillaume Amontons and Charles Augustin de Coulomb \cite{coulomb1821theorie} during the eighteenth century. Then, it was studied more precisely by Richard Stribeck \cite{stribeck1902wesentlichen} who experimentally observed a decrease of the friction force at low velocity. Then many models arose trying to fit with the experiments conducted by Stribeck such as the Dahl model \cite{dahl1968solid}, the LuGre model \cite{de1995new} or Leuven friction model \cite{swevers2000integrated}. The survey \cite{armstrong1994survey} and the paper \cite{liu2015experimental} draw comparisons on several models from simulation and experimental points of view.
	
	Before designing controllers (see \cite{beerens2019reset,bisoffi2017global,putra2007analysis}) which were able to reduce the undesirable effects, it was needed to provide analysis tools to quantify the amplitudes of the induced oscillations \cite{kh2004stability}. This challenge gave rise to many techniques quantifying the nonlinear behavior introduced by the nonlinear term. For an empirical analysis, one can refer to \cite{armstrong1990stick}. An approximation based method like the describing method is studied in \cite{canudasdewit:hal-00394990,mcmillan1997non}. An investigation of the analytical solution is conducted in \cite{thomsen2003analytical} and Lyapunov methods are used in \cite{abdo2011}. Few of the previously cited works are providing an exact stability test that is reliable with a low computational burden, and, to the best of our knowledge, there does not exist any regional (local) stability analysis.
	
	This paper deals with the stability analysis of a mass-spring system subject to friction taking into account the the stick-slip phenomenon, meaning that the mass periodically may stick to the ground.
	This paper focuses on deriving numerically tractable conditions ensuring the global asymptotic stability of the origin.
	The approach proposed to attain this objective follows an alternative route to the one developed in \cite{bisoffi2017global}, \cite{zaccarianARC2020}. Furthermore, it is important to emphasize that we do not use the notion of dissipated energy by friction \cite{leine2007stability} to design the Lyapunov function, preferring the use of quadratic Lyapunov functions to simplify the developments. Indeed, the idea is to combine two intermediate results, the first one relying on a global convergence property to an attractor around the equilibrium point, whereas the second one focuses on the estimation of the basin of attraction of the equilibrium point. Hence, the first result concerns the characterization of an attractor around the equilibrium point, in which converges the closed-loop trajectories, by using Lyapunov-based arguments. The second result studies the regional asymptotic stability of the equilibrium point and proposes an estimation of its basin of attraction. The main result expands the two sets of previous conditions to ensure that the attractor issued from the first result is included in the basin of attraction of the origin computed from the second result. Moreover, we provide conditions related to the system physics to characterize the case when the conditions for global asymptotic stability of the origin are feasible. An illustrative example shows the key strengths and the drawbacks of the proposed technique. 
	
	The paper is organized as follows. Section \ref{sec:model-pb} is devoted to the description of the physical setup and the problem statement. In Section \ref{sec:attractor}, the characterization of the attractor around the origin is presented. In Section~\ref{sec:basin}, an inner-estimate of the basin of attraction of the origin is derived. The main result dealing with the global asymptotic stability of the origin is given in Section \ref{sec:GAS}. Section \ref{sec:ex} illustrates the effectiveness of the proposed approach. Finally, Section \ref{sec:conclu} ends the paper emphasizing possible perspectives.
	
	\noindent{\bf Notation.} $\R^{n \times m}$ stands for the set of all $n \times m$ real matrices. $P \in \S_+^n$ or equivalently $P \succ 0$ denotes a symmetric positive-definite matrix of $\R^{n \times n}$. For any matrix $A$, $A^\top$ denotes its transpose. When the matrix is square, we define the operation $\He(A) = A + A^{\top}$. The notation $I_n$ denotes the $n$ by $n$ identity matrix. We define the operation $\col(u, v) = \left[ \begin{matrix} u^{\top} & v^{\top} \end{matrix} \right]^{\top}$ for any column vectors $u$ and $v$. The Euclidean norm of a column-vector $u \in \R^n$ is $\| u \| = \sqrt{ u^{\top} u }$.
	The symbol $\langle \cdot, \cdot \rangle$ denotes the standard Euclidean inner product.
	
	\section{Model description and problem formulation}
	\label{sec:model-pb}
	\subsection{Physical setup}
	
	The setup, already introduced in \cite{johanastrom2008revisiting}, consists of studying the position and the velocity of a moving body of mass $m$, connected to a spring with an end moving forward at a constant speed $\dot{x}_A = v_{ref}$, parallel to the direction of movement. This is described in Figure~\ref{fig:physic}. 
	
	\begin{figure}[!hhh!]
		\centering
		\includegraphics[width=8cm]{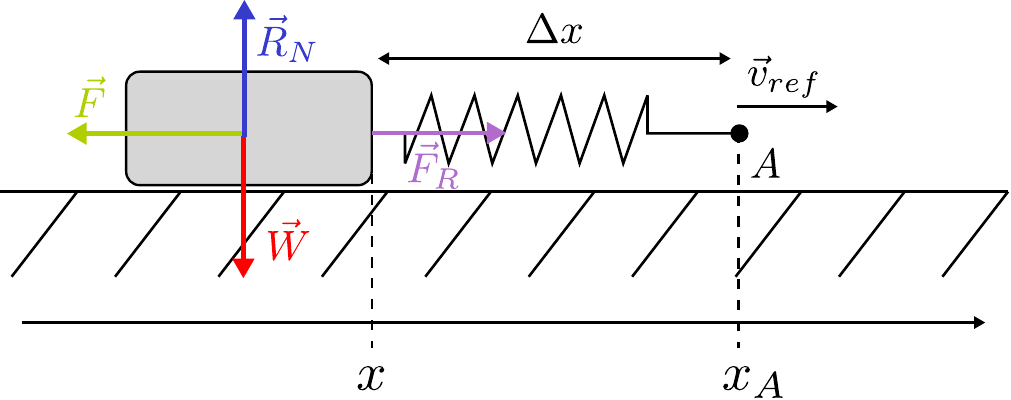}
		\caption{Physical setup: mass with a spring. The forces applied on the mass are in color and the point $A$ has the speed $v_{ref}$.}
		\label{fig:physic}
	\end{figure}
	
	The forces applied to the mass are the following ones:
	\begin{enumerate}
		\item the weight $\vec{W} = m \vec{g}$ where $\vec{g}$ is the gravitational acceleration;
		\item the normal force $\vec{R}_N =  - m \vec{g}$;
		\item the elastic force $\vec{F}_R = k \left( x_A - x - \ell_0 \right)$ with $\ell_0 \in \R$;
		\item and the friction force $\vec{F}$.
	\end{enumerate}
	
	In this work, the simplest friction model is considered. It was introduced firstly by Karnopp \cite{karnopp1985computer} and has shown a good correlation with the experimental data. It was then investigated more deeply in \cite{armstrong1990stick}, and its final mathematical formulation is as follows:
	\begin{eqnarray}
	F(\dot{x}(t)) \!\!\!&=&\!\!\! R_N \left( \mu_C + (\mu_S - \mu_C) e^{-\left| \frac{\dot{x}(t)}{v_s} \right|^{2}} \right) \Sign(\dot{x}(t)) + k_v \dot{x}(t), \nonumber\\
	\!\!\!&=&\!\!\! F_{nl}(\dot{x}(t)) + k_v \dot{x}(t), \label{eq:friction}
	\end{eqnarray}
	where $R_N = mg$, $v_s$ is a positive constant, $\mu_S$ and $\mu_C$ positive constants such that $\mu_S - \mu_C >0$, and the $\Sign$ function is defined as:
	\begin{equation}
	\Sign(\theta) = \left\{ \begin{array}{ll} 
	\sign(\theta) = \frac{\theta}{|\theta|} & \text{ if } \theta \neq 0, \\
	{[-1, 1]} & \text{ if } \theta = 0,
	\end{array} \right.
	\label{eq:defsign}
	\end{equation}
	for $\theta \in \R$. Note that the $\Sign$ is defined as a set-valued function as in \cite{beerens2019reset,bisoffi2017global} and can be seen as the convex hull of the classical $\sign$ function. Note that it is fundamental to consider the set-valued friction force since it induces the Coulomb's cone of friction which is one of the most important characteristics in frictional systems \cite{acary2008numerical}.
	
	The previous expression for the friction force can be split into three important contributions:
	\begin{enumerate}
		\item the dynamic Coulomb force: $\mu_C \left( 1 - e^{-\frac{\dot{x}^2(t)}{v_s^2}} \right) R_N \text{Sign}(\dot{x}(t))$;
		\item the static friction force: $\mu_S e^{-\frac{\dot{x}^2(t)}{v_s^2}} R_N \text{Sign}(\dot{x}(t))$;
		\item and the viscous friction: $k_v \dot{x}(t)$.
	\end{enumerate}
	The Coulomb force is the friction force acting at relatively high speed while the static friction occurs for low velocity. The viscous friction is a linear term that is related mostly to the friction with the air. A chart of this function is proposed in Figure~\ref{fig:friction} to observe the Stribeck curve, that is the non-monotonicity of the function $F$ for positive speed. One novelty of this study is to take into account this phenomenon which is sometimes not included \cite{leine2007stability,van2004attractivity}.
	
	\begin{figure}[!hhh!]
		\centering
		\includegraphics[width=8cm]{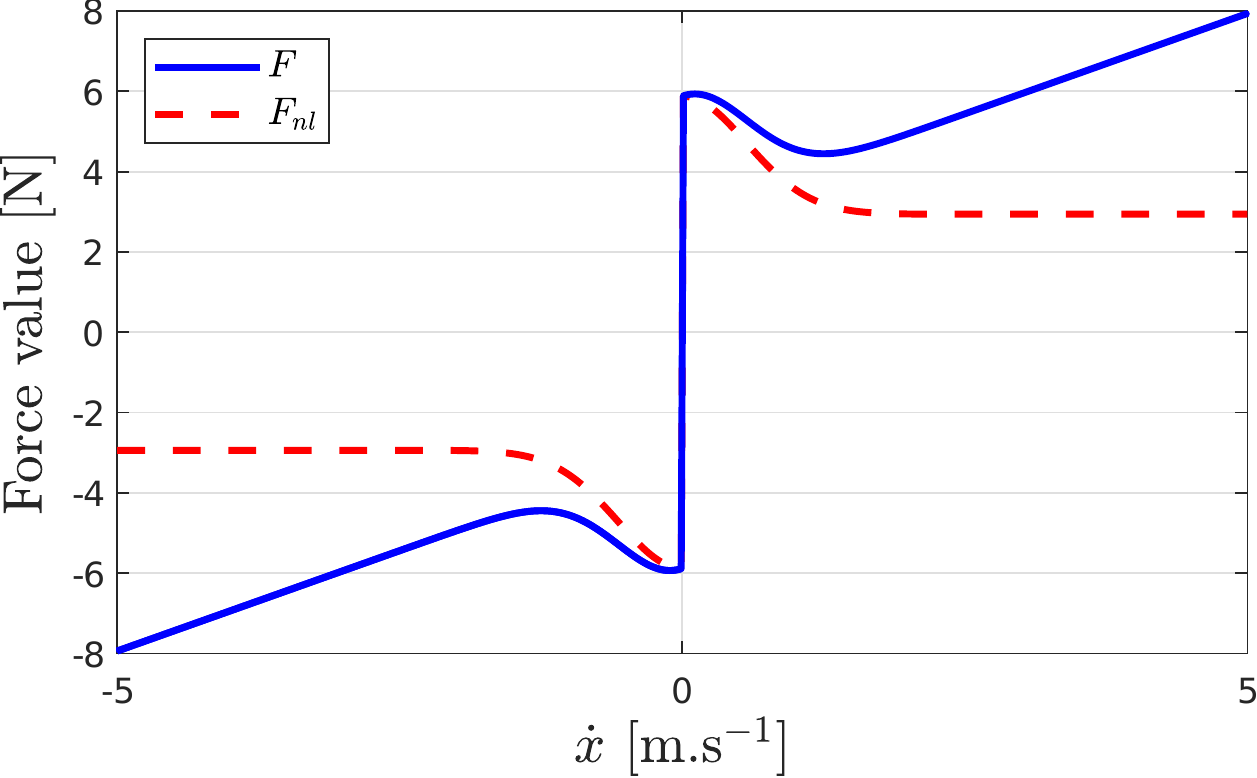}
		\caption{Chart of the friction force (in solid blue) and static and Coulomb forces (in dashed red) in our problem. Note that the friction force is not a monotonous function of the speed, inducing the famous stick-slip effect.}
		\label{fig:friction}
	\end{figure}

	We study here the evolution of $\dot{x}$ and more particularly, its asymptotic behavior. For $x_A(0) \in \R$, by applying the second law of motion, we obtain the following dynamical equation for $t \geq 0$:
	\begin{equation} \label{eq:motion}
	\left\{
	\begin{array}{l}
	\ddot{x}(t) \in - \frac{1}{m}F(\dot{x}(t)) - \frac{k}{m} \left( x(t) + \ell_0 - x_A(0) - v_{ref} t \right), \\
	x(0) = 0, \quad \dot{x}(0) = v_0,
	\end{array}
	\right.
	\end{equation}
	where $v_0 \in \mathbb{R}$.
	
	\subsection{Problem Statement}
	
	Without loss of generality, we assume $x_A(0) = 0$ and introduce the following state variables: $v = \dot{x}$ and $z(t) = x(t) + \ell_0 - v_{ref} t$. \eqref{eq:motion} reads then for $t > 0$ as:
	\begin{equation} \label{eq:stateSpace}
	\left[ \begin{matrix} \dot{v}(t) \\ \dot{z}(t) \end{matrix} \right] \in \left[ \begin{matrix} - \frac{1}{m} \left( F_{nl}(v(t)) + k_v v(t) + k z(t) \right) \\ v(t) - v_{ref} \end{matrix} \right],
	\end{equation}
	with $v(0) = v_0$, $z(0) = z_0$.
	
	\begin{remark}
		Using a similar reasoning as in \cite{bisoffi2017global,zaccarianARC2020,filippov2013differential}, one can conclude that there exists a unique solution to system~\eqref{eq:stateSpace}.
	\end{remark}
	
	We make the two following assumptions.
	\begin{assumption}\label{ass1}
		We assume $v_{ref} > 0$.
	\end{assumption}
	\begin{assumption}\label{ass2}
		The viscosity of the air $k_v$ and the stiffness of the spring $k$ are strictly positive.
	\end{assumption}
	
	Note first that Assumption \ref{ass1} implies that $v_{ref}$ is not zero. We are not seeking to control the position as in \cite{beerens2019reset} but to keep the velocity constant. This is desirable in the drilling industry \cite{azar2007drilling,drillingArticle} where the bit should rotate at a constant angular speed. There are other examples such as speed controllers in cars \cite{lozia2002vehicle,johanastrom2008revisiting} or for tracking purposes in telescopes \cite{gawronski2007control}. To ease the calculations, we assume that $v_{ref}$ is positive but a similar analysis can be conducted with $v_{ref} < 0$. 
	Concerning Assumption \ref{ass2}, this is physically reasonable since it is related to the stability of the error system for large speed as we will see in the sequel.
	
	We are now looking for the equilibrium points $(v_{\infty}, z_{\infty})$ of \eqref{eq:stateSpace}. It makes sense to look for an equilibrium only in the zone where $F$ is a singleton. Due to Assumptions~\ref{ass1} and \ref{ass2}, there is a unique equilibrium point and we easily get:
	\begin{equation}
	\left\{ \begin{array}{l}
	\displaystyle v_{\infty} = v_{ref}, \\
	\displaystyle z_{\infty} = - \frac{F(v_{ref})}{k} = - \frac{F_{nl}(v_{ref}) + k_v v_{ref}}{k}.
	\end{array} \right.
	\end{equation}
	By defining the following:
	\begin{equation}\label{eq:vareps+phi}
	\varepsilon = \left[ \begin{matrix} \varepsilon_1 \\ \varepsilon_2 \end{matrix} \right] = \left[ \begin{matrix} v - v_{ref} \\ z - z_{\infty} \end{matrix} \right],
	\quad 
	\phi_{v_{ref}}(\varepsilon_1) = F_{nl}(\varepsilon_1 + v_{ref}) - F_{nl}(v_{ref}),
	\end{equation}
	the error dynamic is described by the dynamical system:
	\begin{equation} \label{eq:error}
	\dot{\varepsilon}(t) \in A \varepsilon(t) +  B \phi_{v_{ref}}(\varepsilon_1(t))
	\end{equation}
	with $\varepsilon_1(0) = v_0 - v_{ref}$,  $\varepsilon_2(0) = z_0 - z_{\infty}$, 
	\begin{equation} \label{eq:A}
	A = \left[\begin{matrix} - \frac{k_v}{m} & - \frac{k}{m} \\ 1 & 0 \end{matrix} \right] \quad \text{ and } \quad B = \left[ \begin{matrix} - \frac{1}{m} \\ 0 \end{matrix} \right].
	\end{equation}
	
	In the case $\phi_{v_{ref}} = 0$, from Assumption~2, the error system is stable. However, from practical experiments, one can observe that due to the presence of the nonlinearity $\phi_{v_{ref}}(\varepsilon_1)$, it can be impossible to guarantee the global asymptotic convergence of the trajectories to the origin. Some limit cycles around the origin can exist \cite{armstrong1990stick,drillingArticle}. Then, for the class of systems as described by \eqref{eq:error}, it is of interest to characterize a region $\A$ of the state space, containing the origin and the possible limit cycle, which is guaranteed to be a global attractor of system \eqref{eq:error}.
	Then the first problem we intend to address is the following.
	\begin{problem} Characterize an outer-estimation of the globally stable attractor $\A$ of system \eqref{eq:error}.
		\label{pb:attractor}
	\end{problem}
	
	For some cases, one can observe that the trajectories of system \eqref{eq:error} converge to the origin. Then, it appears possible to estimate the region of initial conditions for which the asymptotic stability of the origin will be obtained. In this sense, the second problem we are interested in relies on regional asymptotic stability of the origin. Note that since $\phi_{v_{ref}}$ is continuous around $\phi_{v_{ref}}(0) = 0$, there might exist a basin of attraction around the origin. But to be able to estimate it we need to study what happens for the linearized version of system \eqref{eq:error}, and therefore what happens with the derivative of $\phi$ with respect to $\varepsilon_1$ denoted $\frac{\partial  \phi}{\partial \varepsilon_1}$ \cite{khalil1996nonlinear}.
	Then the second problem we intend to address is the following.
	\begin{problem} Characterize an inner-estimation of the basin of attraction of the origin, denoted $\D$ for system \eqref{eq:error}.
		\label{pb:regional}
	\end{problem}
	Unlike Problem \ref{pb:attractor}, to address this problem, we need to find conditions depending on $v_{ref}$.
	
	Finally, the main problem we want to solve is to provide conditions to ensure that the origin will be a globally asymptotically stable equilibrium point. To do this, the approach will consist of combining the solutions to Problems \ref{pb:attractor} and \ref{pb:regional}. Hence, if we can provide conditions such that $\A \subseteq \D$, then there does not exist a limit cycle and the origin will be globally asymptotically stable. 
	Thus the main problem we intend to address is the following.
	\begin{problem} Derive conditions for the outer-estimation of the attractor to be included in the inner-estimation of the basin of attraction of the origin, that is, $\A \subseteq \D$. 
		\label{pb:main}
	\end{problem}
	
	\section{Global Attractor Characterization}
	\label{sec:attractor}
	
	In order to address Problem~\ref{pb:attractor}, one first proposes a way to encapsulate the nonlinearity $\phi_{v_{ref}}$. Since $F_{nl}$ is an odd function and $F_{nl}^2$ is lower and upper bounded almost everywhere by $F_C^2 = \mu_C^2 m^2 g^2$ and $F_S^2 = \mu_S^2 m^2 g^2$ respectively as shown in Figure~\ref{fig:relay}, the function $F_{nl}$  can be encapsulated into two relays.
	\begin{figure}[!hhh!]
		\centering
		\includegraphics[width=8.5cm]{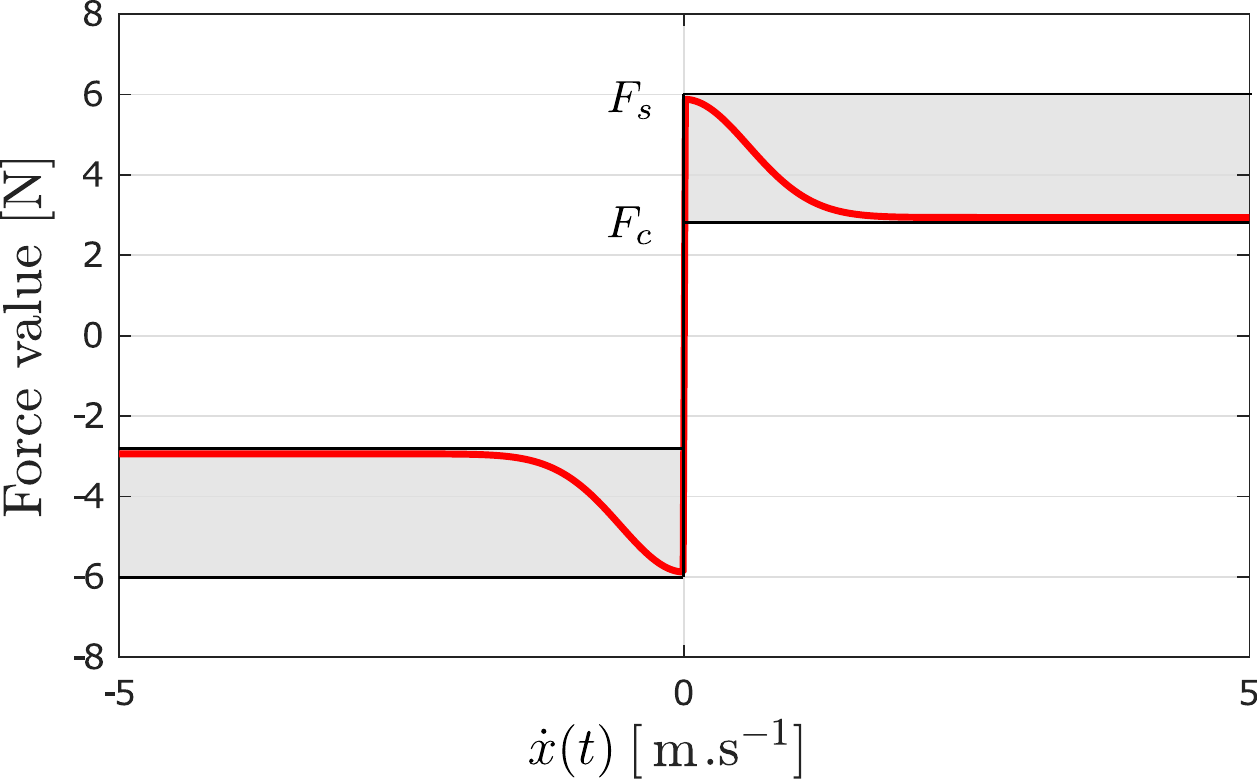}
		\caption{Encapsulation of $F_{nl}$.}
		\label{fig:relay}
		\vspace*{-0.45cm}
	\end{figure}
	
	Consequently, the following lemma is proposed.
	\begin{lemma} \label{lem:relay} For any $\varepsilon_1 \in \R$ the following inequalities hold:
		\begin{equation}
		\begin{array}{rcl}
		F_{nl}(\varepsilon_1 + v_{ref})^2 \!\!\!& \leq \!\!\!& F_S^2, \\
		- 2 (\varepsilon_1 + v_{ref}) F_{nl}(\varepsilon_1 + v_{ref}) \!\!\!& \leq \!\!\!& 0.
		\end{array}
		\end{equation}
		In the case $\varepsilon_1 \neq - v_{ref}$, we also have:
		\begin{equation}
		F_C^2 \leq  F_{nl}(\varepsilon_1 + v_{ref})^2.
		\end{equation}
	\end{lemma}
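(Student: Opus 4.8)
The statement is purely about the scalar function $F_{nl}$, so the plan is to work directly from its definition
\[
F_{nl}(\dot{x}) = R_N \left( \mu_C + (\mu_S - \mu_C) e^{-\left| \frac{\dot{x}}{v_s} \right|^{2}} \right) \Sign(\dot{x}),
\]
and simply bound the scalar factor $g(\dot{x}) := \mu_C + (\mu_S - \mu_C) e^{-(\dot{x}/v_s)^2}$ appearing in front of $\Sign(\dot{x})$. Writing $w = \varepsilon_1 + v_{ref}$ to lighten notation, I would first observe that $e^{-(w/v_s)^2} \in (0,1]$ for all $w \in \R$, so that $\mu_C < g(w) \le \mu_S$ pointwise (using $\mu_S - \mu_C > 0$ from the model hypotheses). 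Since $R_N = mg > 0$, this gives $F_C = \mu_C R_N < R_N\, g(w) \le \mu_S R_N = F_S$.

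For the first inequality, note that when $w \neq 0$ we have $\Sign(w) = \{w/|w|\}$, so $F_{nl}(w)^2 = R_N^2 g(w)^2 \le F_S^2$; when $w = 0$ every element $s$ of $F_{nl}(0) = R_N g(0)[-1,1] = \mu_S R_N[-1,1]$ satisfies $s^2 \le F_S^2$. This establishes $F_{nl}(\varepsilon_1+v_{ref})^2 \le F_S^2$ for all $\varepsilon_1$. For the third inequality, when $w \neq 0$ the same computation gives $F_{nl}(w)^2 = R_N^2 g(w)^2 \ge F_C^2$, which is exactly the claim on $\{\varepsilon_1 \neq -v_{ref}\}$; this lower bound genuinely fails at $w=0$, since $0 \in F_{nl}(0)$, which is why that point must be excluded.

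For the second inequality I would split on the sign of $w$. If $w > 0$ then $F_{nl}(w) = R_N g(w) > 0$, so $-2 w F_{nl}(w) < 0$; if $w < 0$ then $F_{nl}(w) = -R_N g(w) < 0$, so again $-2w F_{nl}(w) < 0$; and if $w = 0$ the product $-2w F_{nl}(w)$ is $\{0\}$ regardless of which element of $[-1,1]$ is selected. Hence $-2(\varepsilon_1+v_{ref})F_{nl}(\varepsilon_1+v_{ref}) \le 0$ in all cases. In more compact form, for any $w$ and any selection, $w F_{nl}(w) = |w|\, R_N\, g(w) \ge 0$ because $R_N g(w) > 0$.

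There is no real obstacle here — the only subtlety worth a sentence is the set-valued nature of $\Sign$ at the origin, which is precisely why the first two inequalities hold everywhere (the degenerate value $0$ is harmless for an upper bound on the square and for the sign condition) while the lower bound $F_C^2 \le F_{nl}(\cdot)^2$ must exclude $\varepsilon_1 = -v_{ref}$. I would state the inequalities as holding for every element of the set $F_{nl}(w)$ when $w$ is in the set-valued regime, so that they are valid along any Filippov solution of \eqref{eq:error}.
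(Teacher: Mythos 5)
Your proof is correct and is essentially the argument the paper leaves implicit: it omits a formal proof, relying on the observation preceding the lemma that $F_{nl}$ is odd and that $F_{nl}^2$ is bounded almost everywhere between $F_C^2$ and $F_S^2$ (Figure~\ref{fig:relay}), which is exactly what you make precise by bounding the scalar factor $\mu_C + (\mu_S-\mu_C)e^{-(w/v_s)^2}$ between $\mu_C$ and $\mu_S$ and treating the set-valued point $w=0$ separately. Your explicit handling of the selection at $w=0$, and of why the lower bound $F_C^2 \leq F_{nl}(\cdot)^2$ must exclude $\varepsilon_1 = -v_{ref}$, is a welcome clarification rather than a deviation.
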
 
	
	
	By using Lemma \ref{lem:relay}, the following theorem proposes a solution to Problem \ref{pb:attractor}.
	
	\begin{theo} \label{theo:global} If there exist $P_g \in \mathbb{S}^2_+$ and $\tau_0, \tau_1, \tau_2, \tau_3, \tau_5 \geq 0$, $\tau_4 \in \R$ such that the following matrix inequalities hold:
		\begin{equation} \label{eq:LMIglobal1}
		\He\left( D^{\top} P_g F \right) - \tau_0 \Pi_0 - \tau_1 \Pi_1 - \tau_2 \Pi_2 - \tau_3 \Pi_3 \prec 0,
		\end{equation}
		\begin{equation} \label{eq:LMIglobal2}
		\He\left( D^{\top} P_g F \right) - \tau_0 \Pi_0 - \tau_5 \Pi_1 - \tau_4 \Pi_4 \prec 0,
		\end{equation}
		where 
		\[
		\begin{array}{ll}
		D = \left[ \begin{array}{cccc} A & B & - F_{nl}(v_{ref}) B \end{array}\right], & 
		F = \left[ \begin{matrix} I_2 & 0 & 0  \end{matrix} \right],\\
		F_C = \mu_C m g, & F_S = \mu_S m g, \\
		\pi_1 = \left[ \begin{matrix} 1 & 0 & 0 & 0 \end{matrix} \right],& \pi_3 =  \left[ \begin{matrix} 0 & 0 & 1 & 0 \end{matrix} \right], \\
		\pi_4 =  \left[ \begin{matrix} 0 & 0 & 0 & 1 \end{matrix} \right], & \\
		\Pi_0 = \pi_4^{\top} \pi_4 - F^{\top} P_g F, & \Pi_1 = \pi_3^{\top} \pi_3 - F_S^2 \pi_4^{\top} \pi_4, \\
		\Pi_2 = F_C^2 \pi_4^{\top} \pi_4 - \pi_3^{\top} \pi_3, &
		\Pi_3 = -\He\left( (\pi_1 + v_{ref} \pi_4)^{\top} \pi_3 \right), \\
		\Pi_4 = (\pi_1 + v_{ref} \pi_4)^{\top}(\pi_1 + v_{ref} \pi_4),
		\end{array}
		\]
		then trajectories of system~\eqref{eq:error} globally converge to the set 
		\[
		\A(P_g) = \left\{ \varepsilon \in \mathbb{R}^2 \ | \ \varepsilon^{\top} P_g \varepsilon < 1 \right\}.
		\]
		Hence, the set $\A(P_g)$ is a solution to Problem \ref{pb:attractor}.
	\end{theo}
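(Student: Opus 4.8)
The plan is to run a quadratic-Lyapunov plus S-procedure argument on a lifted state vector. Take $V(\varepsilon)=\varepsilon^{\top}P_g\varepsilon$ and introduce $w(t)=F_{nl}\bigl(\varepsilon_1(t)+v_{ref}\bigr)$, understood as a measurable selection of the set-valued friction consistent with the (unique) Filippov solution of~\eqref{eq:error}, together with the augmented vector $\xi(t)=\col\bigl(\varepsilon(t),\,w(t),\,1\bigr)\in\R^4$. A direct computation using the value of $z_\infty$ and the definition of $\phi_{v_{ref}}$ gives the two identities $F\xi(t)=\varepsilon(t)$ and $D\xi(t)=\dot\varepsilon(t)$ for almost every $t\ge0$: the last column $-F_{nl}(v_{ref})B$ of $D$, multiplied by the constant entry $1$ of $\xi$, is exactly what turns $Bw$ into $B\phi_{v_{ref}}(\varepsilon_1)$. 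Since $V\in C^1$ and $t\mapsto\varepsilon(t)$ is absolutely continuous, $\tfrac{d}{dt}V(\varepsilon(t))$ exists a.e.\ and equals $\xi(t)^{\top}\He\!\bigl(D^{\top}P_g F\bigr)\xi(t)$.

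Next, I would translate Lemma~\ref{lem:relay} into sign conditions on the quadratic forms $\xi^{\top}\Pi_i\xi$: one checks $\xi^{\top}\Pi_0\xi=1-V(\varepsilon)$, $\xi^{\top}\Pi_1\xi=w^2-F_S^2\le 0$, $\xi^{\top}\Pi_2\xi=F_C^2-w^2$, $\xi^{\top}\Pi_3\xi=-2(\varepsilon_1+v_{ref})w\le 0$, and $\xi^{\top}\Pi_4\xi=(\varepsilon_1+v_{ref})^2\ge0$, where by Lemma~\ref{lem:relay} the $\Pi_1$ and $\Pi_3$ terms are non-positive for every $\varepsilon_1$, while $\xi^{\top}\Pi_2\xi\le0$ holds as soon as $\varepsilon_1\ne-v_{ref}$. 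Then split on the value of $\varepsilon_1(t)$. If $\varepsilon_1(t)\ne-v_{ref}$, pre- and post-multiply~\eqref{eq:LMIglobal1} by $\xi(t)^{\top}$ and $\xi(t)$; using $\tau_0,\tau_1,\tau_2,\tau_3\ge0$ and the signs above yields $\tfrac{d}{dt}V(\varepsilon(t))\le\tau_0\bigl(1-V(\varepsilon(t))\bigr)-\delta$ for some $\delta>0$, the $-\delta$ coming from strictness of the LMI together with $\|\xi(t)\|^2\ge1$. If $\varepsilon_1(t)=-v_{ref}$, then $\xi^{\top}\Pi_4\xi=0$ (so the sign of $\tau_4$ is irrelevant) while still $\xi^{\top}\Pi_1\xi\le0$, and the same manipulation applied to~\eqref{eq:LMIglobal2} gives the same estimate $\tfrac{d}{dt}V(\varepsilon(t))\le\tau_0\bigl(1-V(\varepsilon(t))\bigr)-\delta$.

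Thus $\tfrac{d}{dt}V(\varepsilon(t))\le\tau_0\bigl(1-V(\varepsilon(t))\bigr)-\delta$ for a.e.\ $t\ge0$. Consequently $V$ decreases at rate at least $\delta$ whenever $V(\varepsilon(t))\ge1$, so integrating shows every trajectory reaches $\mathcal{A}(P_g)=\{\varepsilon\in\R^2:V(\varepsilon)<1\}$ in finite time; moreover on the level set $V=1$ one has $\tfrac{d}{dt}V<0$, hence $\mathcal{A}(P_g)$ is forward invariant (which also guarantees the solution stays bounded and exists for all $t\ge0$). Therefore $\mathcal{A}(P_g)$ is a globally attractive invariant set, i.e.\ an outer-estimate of the attractor, which solves Problem~\ref{pb:attractor}.

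The main obstacle I expect is the rigorous treatment of the set-valued right-hand side: one has to justify that along Filippov solutions the selection $w(t)$ lies in $[-F_S,F_S]$ on the sticking set $\{\varepsilon_1=-v_{ref}\}$ (so that $\xi^{\top}\Pi_1\xi\le0$ there), that $\tfrac{d}{dt}V(\varepsilon(t))$ may be evaluated pointwise a.e.\ through the chain rule, and that the pair~\eqref{eq:LMIglobal1}--\eqref{eq:LMIglobal2} is precisely what is needed to cover the slip and stick regimes respectively; once this is in place, the five quadratic-form identities and the S-procedure combination are routine.
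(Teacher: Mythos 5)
Your proposal is correct and follows essentially the same route as the paper's own proof: the quadratic Lyapunov function $V_g(\varepsilon)=\varepsilon^{\top}P_g\varepsilon$, the lifted vector $\col(\varepsilon,F_{nl}(\varepsilon_1+v_{ref}),1)$, the translation of Lemma~\ref{lem:relay} into sign constraints on the $\xi^{\top}\Pi_i\xi$, the S-procedure with multipliers $\tau_0,\dots,\tau_5$, and the case split between $\varepsilon_1\neq-v_{ref}$ (LMI~\eqref{eq:LMIglobal1}) and $\varepsilon_1=-v_{ref}$ (LMI~\eqref{eq:LMIglobal2}). Your added care about the measurable selection along Filippov solutions, the a.e.\ chain rule, and the finite-time reaching and forward invariance of $\A(P_g)$ only makes explicit points the paper leaves implicit.
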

	
	\begin{proof} The proof of this theorem is based on the following Lyapunov function:
		\begin{equation}
		V_g(\varepsilon) = \varepsilon^{\top} P_g \varepsilon.
		\end{equation}
		with $P_g=P_g^\top \succ 0$. 
		Inspired by \cite{tar:que:pri/ieee2014}, \cite{fer:gou:tar/auto2015}, one wants to verify that there exists a class $\mathcal{K}$ function $\alpha$ such that $\langle \nabla V_g,f \rangle \leq - \alpha(V_g(\varepsilon))$, for all $\varepsilon$ such that $\varepsilon^{\top} P_g \varepsilon \geq 1$ (i.e. for any $\varepsilon \in \mathbb{R}^2 \backslash \A(P_g)$), for any $f \in A \varepsilon(t) +  B \phi_{v_{ref}}(\varepsilon_1(t))$ and for all nonlinearities $F_{nl}$ satisfying Lemma \ref{lem:relay}. In other words, by using the S-variable approach \cite{Svariable}, one wants to ensure that
		$\dot{V}_g(\varepsilon) - \tau_0(1 - \varepsilon^{\top} P_g \varepsilon) \leq - \alpha(V_g(\varepsilon))$, with $\tau_0 > 0$, for any $\varepsilon \in \mathbb{R}^2 \backslash \A(P_g)$, for any $f \in A \varepsilon(t) +  B \phi_{v_{ref}}(\varepsilon_1(t))$ and for all nonlinearities $F_{nl}$ satisfying Lemma \ref{lem:relay}.
		
		In the following, we denote by language abuse $\langle \nabla V_g,f \rangle$ by $ \dot{V}_g(\varepsilon)$ meaning that we compute $\langle \nabla V_g,f \rangle$ for any solution $f$ to the differential inclusion \eqref{eq:error}. Then for all solution to 
		\eqref{eq:error}, one gets: 
		\begin{equation}
		\dot{V}_g(\varepsilon) = \xi_g^{\top} \left( D^{\top} P_g F + F^{\top} P_g D \right) \xi_g,
		\end{equation}
		where 
		$
		\xi_g = \col \left( \varepsilon,  F_{nl}(\varepsilon_1 + v_{ref}), 1 \right)
		$.
		The existence of such a function $\alpha$ is discussed in two different cases.
		\begin{itemize}
			\item \textit{First case:} $\varepsilon_1 \neq - v_{ref}$. \\
			Using the augmented vector $\xi_g$ and the definition of the $\pi_i$'s, the conditions of Lemma \ref{lem:relay} read:
			\[
			\begin{array}{rl}
			\xi_g^\top (\pi_3^\top \pi_3 - F_S^2 \pi_4^\top \pi_4) \xi_g \!\!\!\!& \leq 0, \\
			\xi_g^\top (F_C^2 \pi_4^\top \pi_4 -\pi_3^\top \pi_3) \xi_g \!\!\!\!& \leq 0, \\ 
			- \xi_g\He\left( (\pi_1 + v_{ref} \pi_4)^{\top} \pi_3 \right) \xi_g \!\!\!\!& \leq 0.
			\end{array}
			\]
			Then, by using the compact notation $\Pi_i$'s inspired by the S-variable approach, we define the following function:
			\begin{equation} \label{eq:global1}
			\hspace*{-0.3cm}
			\begin{array}{rl}
			\L\!\!\!&= \dot{V}_g(\varepsilon) - \tau_0(1 - \varepsilon^{\top} P_g \varepsilon) - \tau_1 \xi_g^\top \Pi_1 \xi_g -\tau_2 \xi_g^\top \Pi_2 \xi_g -\tau_3 \xi_g^\top \Pi_3 \xi_g
			\end{array}
			\end{equation}
			for positive scalars $\tau_i$, $i\in\{0,1,2,3\}$.\\
			Consequently, if \eqref{eq:LMIglobal1} is verified then $\L$ in \eqref{eq:global1} is negative-definite for ${\varepsilon_1 \neq - v_{ref}}$ and there exists $\alpha_1 > 0$ such that $\L \leq - \alpha_1 \varepsilon^{\top} \varepsilon$.
			
			\item \textit{Second case:} $\varepsilon_1 = -v_{ref}$.\\
			Using Lemma~\ref{lem:relay} it follows:
			\begin{equation}
			\begin{array}{rl}
			\xi_g^\top (\pi_3^\top \pi_3 - F_S^2 \pi_4^\top \pi_4) \xi_g \!\!\!\!& \leq 0, \\
			\xi_g^\top (\pi_1 + v_{ref} \pi_4)^{\top}(\pi_1 + v_{ref} \pi_4) \xi_g \!\!\!\!& \leq 0, \\ 
			- \xi_g\He\left( (\pi_1 + v_{ref} \pi_4)^{\top} \pi_3 \right) \xi_g \!\!\!\!& = 0.
			\end{array}
			\end{equation}
			Let us define $\L$ for $\varepsilon_1 = - v_{ref}$ as
			\begin{equation} \label{eq:global2}
			\hspace*{-0.3cm}
			\begin{array}{rl}
			\L\!\!\!&= \dot{V}_g(\varepsilon) - \tau_0(1 - \varepsilon^{\top} P_g \varepsilon) - \tau_5 \xi_g^\top \Pi_1 \xi_g -\tau_4 \xi_g^\top \Pi_4 \xi_g
			\end{array}
			\end{equation}
			for the same $\tau_0$ as defined previously and $\tau_4 \in \R, \tau_5 \geq 0$.\\
			Thus, if \eqref{eq:LMIglobal1} is satisfied then $\L$ in \eqref{eq:global2} is negative-definite for ${\varepsilon_1 = - v_{ref}}$ and there exists $\alpha_2 > 0$ such that $\L \leq - \alpha_2 \varepsilon^{\top} \varepsilon$.
		\end{itemize}
		
		If \eqref{eq:LMIglobal1} and \eqref{eq:LMIglobal2} are verified simultaneously then for $\alpha = \min(\alpha_1, \alpha_2) > 0$, we get $\mathcal{L} \ \leq - \alpha \varepsilon^\top \varepsilon$. By definition one gets $\dot{V}_g(\varepsilon) - \tau_0(1 - \varepsilon^{\top} P_g \varepsilon) \leq \mathcal{L}$ and since $\dot{V}_g(\varepsilon) \leq \dot{V}_g(\varepsilon) - \tau_0(1 - \varepsilon^{\top} P_g \varepsilon)$ for any $\varepsilon \in \mathbb{R}^2 \backslash \A(P_g)$, it follows $\dot{V}_g(\varepsilon) \leq - \alpha \varepsilon^\top \varepsilon$ for any $\varepsilon \in \mathbb{R}^2 \backslash \A(P_g)$. 
		That ends the proof. \end{proof}
	
	\begin{remark}
		Following the same strategy as in \cite[Corrolary~2]{drillingArticle}, setting $\tau_2 = \tau_3 = 0$ in \eqref{eq:LMIglobal1} and using Schur complement shows that $A$ is Hurwitz (which is the case due to Assumption~\ref{ass2}), if and only if there exists $P_g$ such that \eqref{eq:LMIglobal1}-\eqref{eq:LMIglobal2} are satisfied.
	\end{remark}
	
	\begin{remark}
		Relations~\eqref{eq:LMIglobal1}-\eqref{eq:LMIglobal2} are quasi-LMIs since there is a nonlinearity due to the product between the scalar $\tau_0$ and the matrix $P_g$. Note that a necessary condition to get \eqref{eq:LMIglobal1}-\eqref{eq:LMIglobal2} is that
		\begin{equation}
		A^{\top} P_g + P_g A + \tau_0 P_g \prec 0.
		\end{equation}
		As explored in \cite{drillingArticle}, it means that the following inequality must hold:
		\begin{equation}
		0 \leq \tau_0 \leq \tau_0^{max} = - 2 \max_{\mu \text{ an eigenvalue of } A} (\Re(\mu)).
		\end{equation}
	\end{remark}
	
	The previous remark emphasizes an efficient way to solve \eqref{eq:LMIglobal1}-\eqref{eq:LMIglobal2}. A solution is to use a grid on $\tau_0 \in [0, \tau_0^{max}]$ and, in this case, \eqref{eq:LMIglobal1}-\eqref{eq:LMIglobal2} turn into LMIs.
	
	To get an outer-estimate close to the real attractor, one can solve various optimization problems (see for instance \cite{tarbouriech2011stability}). Among these techniques, the minimization of the maximal axis of the ellipsoid leads to the following optimisation problem:
	\begin{equation} \label{eq:optAttractor}
	\begin{array}{cc}
	\min & - \eta \\
	\text{subject to} & \eqref{eq:LMIglobal1} \text{ and } \eqref{eq:LMIglobal2} \text{ hold with } P_g \succeq \eta I_2.
	\end{array}
	\end{equation}
	
	\section{Regional Asymptotic Stability Analysis}
	\label{sec:basin}
	
	This section concentrates on a regional analysis to solve Problem \ref{pb:regional}.
	
	\subsection{Stability of the linearized system}
	
	Similarly to the techniques used in presence of saturation or backlash nonlinearities (see, for example, \cite{tar:que:pri/ieee2014,tarbouriech2011stability}), the fact  that $\phi_{v_{ref}}$ is continuous with $\phi_{v_{ref}}(0) = 0$, is not sufficient to study the regional stability of the origin. Actually, in a neighborhood of the origin, the function $\phi_{v_{ref}}$ is perfectly smooth, but that does not imply the existence of a basin of attraction. Furthermore, in standard Coulomb friction system \cite{leine2007stability}, the equilibrium can be a point of set-valuedness of the friction law, but in the context of Assumptions \ref{ass1} and \ref{ass2}, the study of the linearized system is justified by the smoothness of $\phi_{v_{ref}}$.
	Hence, using Assumption~\ref{ass1}, we must consider the stability of its linearization around an equilibrium point to apply the first Lyapunov principle \cite{khalil1996nonlinear}. To this extent, we consider a rewriting of the error dynamics system \eqref{eq:error} as follows:
	\begin{equation} \label{eq:errorlocal}
	\begin{array}{lcl}
	\dot{\varepsilon}(t) \!\!\!& \in &\!\!\! \left(A  +  B  \Gamma_{v_{ref}} C \right) \varepsilon(t) + B \left(\phi_{v_{ref}}(\varepsilon_1(t)) -  \Gamma_{v_{ref}} C \varepsilon(t)\right)
	\end{array}
	\end{equation}
	with $A$, $B$, $\phi_{v_{ref}}$ defined in \eqref{eq:A}, $C = \left[ \begin{smallmatrix} 1 & 0 \end{smallmatrix} \right]$ and 
	\[
	\Gamma_{v_{ref}} = \left. \frac{\partial  \phi_{v_{ref}}}{\partial \varepsilon_1}\right|_{\varepsilon_1 = 0} = - 2 R_N (\mu_S - \mu_C) \frac{v_{ref}}{v_s^2} e^{-\frac{v_{ref}^2}{v_s^2}}.
	\]
	System \eqref{eq:errorlocal} reads:
	\begin{equation} \label{eq:errorlocal2}
	\dot{\varepsilon}(t) \in A_0 \varepsilon(t)  +  B \psi_{v_{ref}}(\varepsilon_1)
	\end{equation}
	with 
	\[
	A_0 = A + B \Gamma_{v_{ref}} C = \left[ \begin{smallmatrix} - \frac{k_v + \Gamma_{v_{ref}}}{m} & - \frac{k}{m} \\ 1 & 0 \end{smallmatrix} \right] \text{ and } \psi_{v_{ref}}(\varepsilon_1) = \phi_{v_{ref}}(\varepsilon_1)- \Gamma_{v_{ref}} \varepsilon_1
	\]
	Consequently, the linearization of system~\eqref{eq:errorlocal} around the origin is:
	\begin{equation} \label{eq:errorLinear}
	\dot{\tilde{\varepsilon}}(t) = A_0 \tilde{\varepsilon}(t)
	\end{equation}
	
	Hence, there exists a basin of attraction $\D$ if and only if $A_0$ Hurwitz, which depends then on $v_{ref}$. The following lemma characterizes the stability of \eqref{eq:errorLinear} as a function of $v_{ref}$.
	\begin{lemma} Matrix $A_0$ is Hurwitz if and only if 
		\begin{equation}\label{condLemA0}
		- k_v + 2 R_N (\mu_S - \mu_C) \frac{|v_{ref}|}{v_s^2} e^{-\frac{v_{ref}^2}{v_s^2}}< 0,
		\end{equation}
		or equivalently, $\theta(v_{ref}) = |v_{ref}| e^{-\frac{v_{ref}^2}{v_s^2}}< \frac{k_v v_s^2}{2 R_N (\mu_S - \mu_C)}$.
		\label{lem:A0}
	\end{lemma}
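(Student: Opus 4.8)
The plan is to exploit the fact that $A_0$ is a $2\times 2$ real matrix, for which the Hurwitz property admits the elementary characterization: $A_0$ is Hurwitz if and only if $\mathrm{tr}(A_0) < 0$ and $\det(A_0) > 0$. This is just the Routh--Hurwitz criterion applied to the characteristic polynomial $\lambda^2 - \mathrm{tr}(A_0)\lambda + \det(A_0)$, whose roots both lie in the open left half-plane precisely when all its coefficients are strictly positive.

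First I would compute the two quantities directly from
\[
A_0 = \left[ \begin{matrix} - \frac{k_v + \Gamma_{v_{ref}}}{m} & - \frac{k}{m} \\ 1 & 0 \end{matrix} \right].
\]
One gets $\det(A_0) = \frac{k}{m}$, which is strictly positive under Assumption~\ref{ass2} irrespective of $v_{ref}$; hence the determinant condition is always met and plays no role. Consequently $A_0$ is Hurwitz if and only if $\mathrm{tr}(A_0) = - \frac{k_v + \Gamma_{v_{ref}}}{m} < 0$, i.e. if and only if $k_v + \Gamma_{v_{ref}} > 0$.

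It then remains to substitute the explicit value
\[
\Gamma_{v_{ref}} = - 2 R_N (\mu_S - \mu_C) \frac{v_{ref}}{v_s^2} e^{-\frac{v_{ref}^2}{v_s^2}},
\]
and to use Assumption~\ref{ass1}, namely $v_{ref} > 0$, so that $v_{ref} = |v_{ref}|$. The inequality $k_v + \Gamma_{v_{ref}} > 0$ becomes $- k_v + 2 R_N (\mu_S - \mu_C) \frac{|v_{ref}|}{v_s^2} e^{-\frac{v_{ref}^2}{v_s^2}} < 0$, which is \eqref{condLemA0}; dividing through by $\frac{2 R_N(\mu_S - \mu_C)}{v_s^2}$ (positive since $\mu_S - \mu_C > 0$ and $R_N = mg > 0$) yields the equivalent form $\theta(v_{ref}) = |v_{ref}| e^{-\frac{v_{ref}^2}{v_s^2}} < \frac{k_v v_s^2}{2 R_N (\mu_S - \mu_C)}$.

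There is essentially no hard step here: the only points requiring care are invoking Assumption~\ref{ass2} to dispose of the determinant condition for free, and using Assumption~\ref{ass1} to replace $v_{ref}$ by $|v_{ref}|$ (the latter being written with an absolute value only so that the statement transparently also covers the symmetric case $v_{ref} < 0$ alluded to after Assumption~\ref{ass1}). The rest is a one-line rearrangement of a scalar inequality.
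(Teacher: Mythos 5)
Your proof is correct and is exactly the argument the paper intends (the paper omits the proof but notes the lemma is ``directly related to the eigenvalues of matrix $A_0$''): the trace--determinant (Routh--Hurwitz) characterization for $2\times 2$ matrices, with $\det(A_0)=\tfrac{k}{m}>0$ disposed of by Assumption~\ref{ass2} and the trace condition reducing to \eqref{condLemA0} via Assumption~\ref{ass1}. Nothing to add.
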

	
	Although Lemma \ref{lem:A0} is directly related to the eigenvalues of matrix $A_0$ as a function of $v_{ref}$, it is interesting to note that condition \eqref{condLemA0} 
	is related to the monotonicity of the mapping (\ref{eq:errorlocal2}) according to \cite{brogliato2020dynamical}.
	
	
	There are two roots such that $\theta(v_{ref}) = \frac{k_v v_s^2}{2 R_N (\mu_S - \mu_C)}$. By denoting them $v_{ref 1}$ and $v_{ref 2} > v_{ref 1}$, one can exhibit several zones regarding the stability of $A_0$: 
	\begin{enumerate}
		\item $A_0$ is Hurwitz for $v_{ref} \in (0,v_{ref 1}) \cup (v_{ref 2}, \infty)$ and, in virtue of the first Lyapunov principle \cite{khalil1996nonlinear}, there exists a basin of attraction for \eqref{eq:errorlocal2} around the origin;
		\item the origin of \eqref{eq:errorLinear} is exponentially unstable for $v_{ref} \in (v_{ref 1},v_{ref 2})$;
		\item $A_0$ has poles on the imaginary axis for $v_{ref} = v_{ref 1}$ or $v_{ref} = v_{ref 2}$.
	\end{enumerate}
	
	We concentrate on an inner-estimation of the basin of attraction around the origin for $v_{ref} \in (0,v_{ref 1}) \cup (v_{ref 2}, \infty)$.
	
	\subsection{Numerical inner-estimate of $\D$}
	
	
	\begin{figure}
		\centering
		\begin{subfigure}[t]{0.46\linewidth}
			\includegraphics[width=\textwidth]{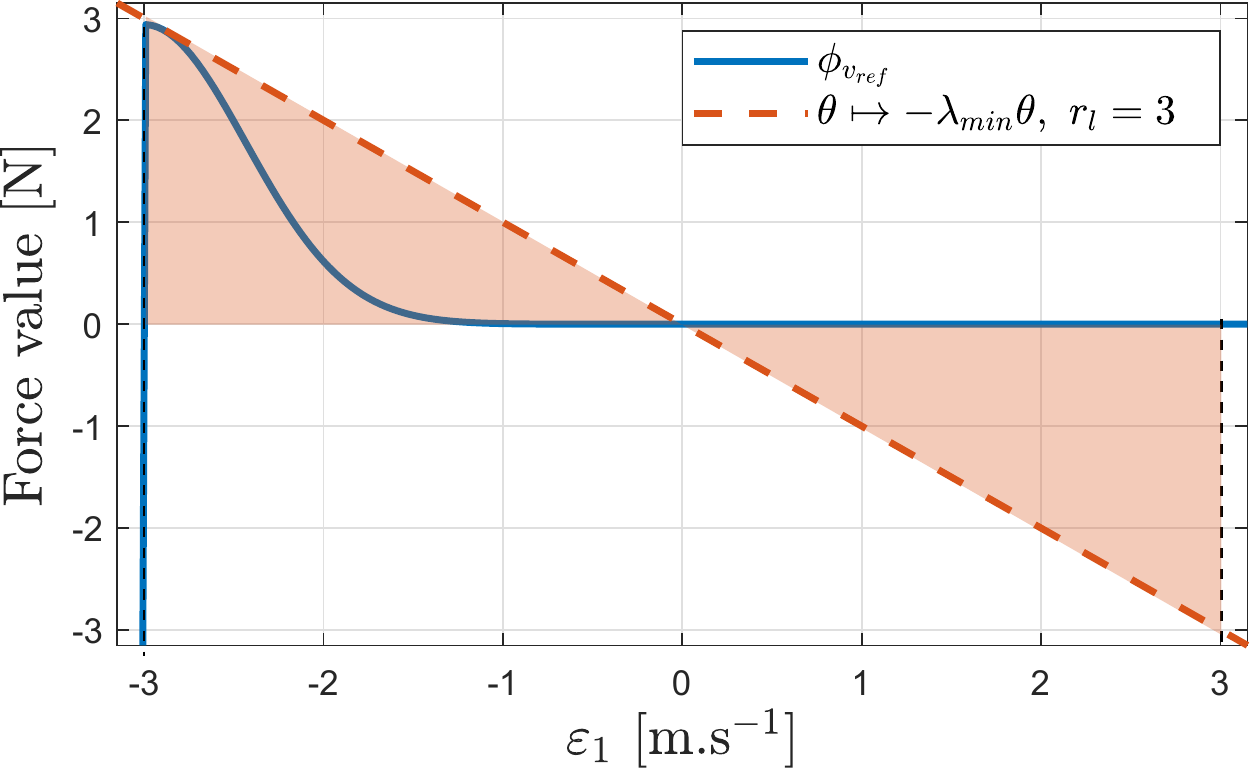}
			\caption{$\lambda_{min} = 1.004$, $r_l = 3$}
			\label{fig:rl3}
		\end{subfigure} \
		\begin{subfigure}[t]{0.46\linewidth}
			\includegraphics[width=\textwidth]{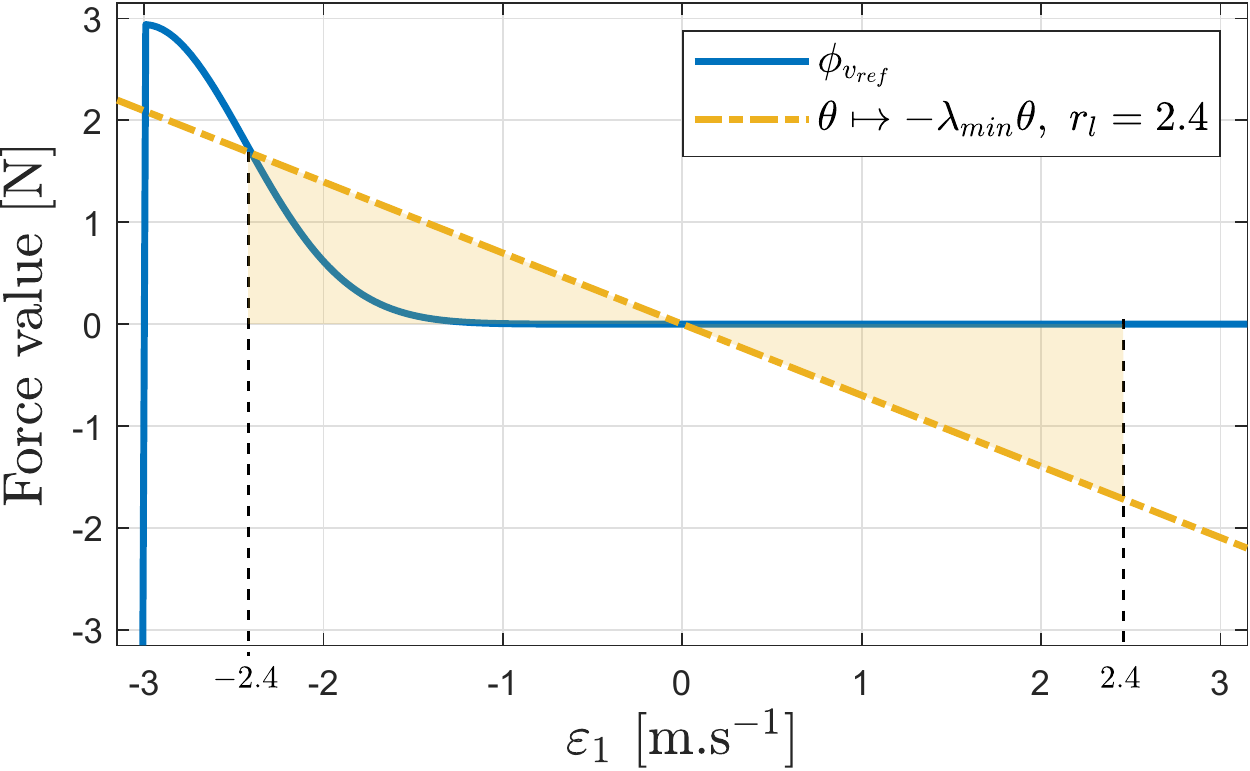}
			\caption{$\lambda_{min} = 0.698$, $r_l = 2.4$}
			\label{fig:rl2.4}
		\end{subfigure}
		\caption{Sector conditions on the function $\phi_{v_{ref}}$ with $v_{ref} = 3$.}
		\label{fig:sectorCondition}
	\end{figure}
	
	Note that $\phi_{v_{ref}}$, defined in \eqref{eq:vareps+phi}, is bounded on $(-v_{ref}, \infty)$ and $\varepsilon_1 \phi_{v_{ref}}(\varepsilon_1) \leq 0$ for $\varepsilon_1 \in (-v_{ref}, \infty)$, consequently, $\phi_{v_{ref}}$ is a local sector-bounded nonlinearity as depicted in Figure \ref{fig:sectorCondition}. A sector condition on $\phi_{v_{ref}}$ is directly related to the fact that system (\ref{eq:errorlocal}) is strictly positive real or equivalently here, strictly passive \cite[Theorem 4.73]{brogliato2007dissipative}, \cite{mad:ada/ieee2016}. However, this property is global and in the current case we need to recover more information to deal with the local stability of system (\ref{eq:errorlocal}). This is the objective of the following lemma, adapted from Lemma 1.6 in \cite{tarbouriech2011stability}, to deal with 
	$\psi_{v_{ref}}$.
	\begin{lemma} Given $r_l$ such that
		$v_{ref} > r_l > 0$. For any $\varepsilon \in \mathcal{S}(r_l) = \{\varepsilon \in \R^2 \ | \ - r_l \leq C \varepsilon \leq r_l\}$, the nonlinearity $\psi_{v_{ref}}$ satisfies the following inequality:
		\begin{equation}
		(\psi_{v_{ref}}(\varepsilon_1) + \Gamma_{v_{ref}} C \varepsilon)^\top (\psi_{v_{ref}}(\varepsilon_1) + \Gamma_{v_{ref}} C \varepsilon + \lambda C \varepsilon) \leq 0
		\label{eq:psisec}
		\end{equation}
		for any positive scalar $\lambda$ satisfying
		\begin{equation}
		\lambda \geq \lambda_{min}(r_l) > 0
		\label{eq:lambdamin1}
		\end{equation}
		with
		\begin{equation}
		\lambda_{min}(r_l) = \sup_{\varepsilon_1 \in [-r_l,0) \cup (0, r_l]} -\frac{\phi_{v_{ref}}(\varepsilon_1)}{\varepsilon_1} 
		\geq - \Gamma_{v_{ref}}.
		\label{eq:lambdamin2}
		\end{equation}
		\label{lem:psi}
	\end{lemma}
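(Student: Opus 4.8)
The plan is to exploit a cancellation that collapses the quadratic sector inequality \eqref{eq:psisec} into a one-dimensional condition on $\phi_{v_{ref}}$. Since $C\varepsilon = \varepsilon_1$ and, by definition, $\psi_{v_{ref}}(\varepsilon_1) = \phi_{v_{ref}}(\varepsilon_1) - \Gamma_{v_{ref}}\varepsilon_1$, one has $\psi_{v_{ref}}(\varepsilon_1) + \Gamma_{v_{ref}} C\varepsilon = \phi_{v_{ref}}(\varepsilon_1)$. Substituting this into \eqref{eq:psisec}, the claim reduces to showing
\[
\phi_{v_{ref}}(\varepsilon_1)\left(\phi_{v_{ref}}(\varepsilon_1) + \lambda\varepsilon_1\right) \leq 0
\]
for every $\varepsilon_1 \in [-r_l, r_l]$ (which is exactly what $\varepsilon \in \mathcal{S}(r_l)$ imposes on the first component) and every $\lambda \geq \lambda_{min}(r_l)$.

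For $\varepsilon_1 = 0$ this is immediate since $\phi_{v_{ref}}(0) = 0$. For $\varepsilon_1 \in [-r_l,0)\cup(0,r_l]$, I divide by $\varepsilon_1^2 > 0$ and set $g(\varepsilon_1) := \phi_{v_{ref}}(\varepsilon_1)/\varepsilon_1$, so the inequality becomes $g(\varepsilon_1)\left(g(\varepsilon_1) + \lambda\right) \leq 0$. Because $r_l < v_{ref}$, every such $\varepsilon_1$ lies in $(-v_{ref}, \infty)$, so the property $\varepsilon_1\phi_{v_{ref}}(\varepsilon_1) \leq 0$ recalled just before the lemma yields $g(\varepsilon_1) = \varepsilon_1\phi_{v_{ref}}(\varepsilon_1)/\varepsilon_1^2 \leq 0$. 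On the other hand, by definition of the supremum in \eqref{eq:lambdamin2}, $\lambda \geq \lambda_{min}(r_l) \geq -\phi_{v_{ref}}(\varepsilon_1)/\varepsilon_1 = -g(\varepsilon_1)$, hence $g(\varepsilon_1) + \lambda \geq 0$. A nonpositive number times a nonnegative one is nonpositive, which is precisely the desired inequality.

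It remains to check that $\lambda_{min}(r_l)$ in \eqref{eq:lambdamin2} is a well-defined positive number with $\lambda_{min}(r_l) \geq -\Gamma_{v_{ref}}$. On $(-v_{ref}, \infty)$ the argument $\varepsilon_1 + v_{ref}$ is positive, so there $F_{nl}(\varepsilon_1 + v_{ref}) = R_N\bigl(\mu_C + (\mu_S - \mu_C)e^{-((\varepsilon_1+v_{ref})/v_s)^2}\bigr)$ is smooth; therefore $\phi_{v_{ref}}$ is $C^1$ on $[-r_l, r_l] \subset (-v_{ref},\infty)$, and the map $\varepsilon_1 \mapsto -\phi_{v_{ref}}(\varepsilon_1)/\varepsilon_1$ extends continuously to $\varepsilon_1 = 0$ with value $-\phi_{v_{ref}}'(0) = -\Gamma_{v_{ref}}$. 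Consequently the supremum over $[-r_l,0)\cup(0,r_l]$ equals the maximum of a continuous function on the compact set $[-r_l,r_l]$: it is finite, nonnegative (since $g \leq 0$ above), and at least equal to its value $-\Gamma_{v_{ref}} > 0$ at the origin, which also proves the last inequality in \eqref{eq:lambdamin2}; in particular $\lambda_{min}(r_l) > 0$ so any admissible $\lambda$ is positive. The only point requiring a little care is this last paragraph — the continuity of the difference quotient at the origin and the resulting finiteness of the supremum; everything else is the algebraic reduction performed in the first step.
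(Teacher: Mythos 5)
Your proof is correct and follows essentially the same route as the paper: both reduce \eqref{eq:psisec} to $\phi_{v_{ref}}(\varepsilon_1)\,(\phi_{v_{ref}}(\varepsilon_1)+\lambda\varepsilon_1)\leq 0$ via the cancellation $\psi_{v_{ref}}+\Gamma_{v_{ref}}C\varepsilon=\phi_{v_{ref}}$, then use the sign of $\phi_{v_{ref}}$ on either side of the origin together with the definition of $\lambda_{min}(r_l)$, and obtain the bound $\lambda_{min}(r_l)\geq-\Gamma_{v_{ref}}$ from the limit of $-\phi_{v_{ref}}(\varepsilon_1)/\varepsilon_1$ at $0$. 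Your division by $\varepsilon_1^2$ merely unifies the paper's two nonzero cases, and your compactness argument for the finiteness of the supremum is a welcome (slightly more careful) addition.
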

	
	\begin{proof} Recall that one gets $\phi_{v_{ref}}(\varepsilon_1) = \psi_{v_{ref}}(\varepsilon_1) + \Gamma_{v_{ref}} \varepsilon_1$. For any $\varepsilon \in \mathcal{S}(r_l)$, by definition it follows:
		\begin{equation}
		\left\{\begin{array}{lcl}
		\phi_{v_{ref}}(\varepsilon_1) > 0, \frac{\partial  \phi_{v_{ref}}}{\partial \varepsilon_1} < 0 & \mbox{ if} & - r_l \leq \varepsilon_1 < 0\\
		\phi_{v_{ref}}(\varepsilon_1) = 0, \frac{\partial  \phi_{v_{ref}}}{\partial \varepsilon_1} = \Gamma_{v_{ref}} & \mbox{ if} &  \varepsilon_1 = 0\\
		\phi_{v_{ref}}(\varepsilon_1) < 0, \frac{\partial  \phi_{v_{ref}}}{\partial \varepsilon_1} < 0 & \mbox{ if} & 0 < \varepsilon_1 \leq r_l
		\end{array}\right.
		\label{eq:phiproof}
		\end{equation}
		Hence, by taking the three cases of \eqref{eq:phiproof}, one gets the following:\\
		$\bullet$ Case 1: $- r_l \leq \varepsilon_1 < 0$. In this case the condition \eqref{eq:psisec} holds provided that $\phi_{v_{ref}}(\varepsilon_1) + \lambda \varepsilon_1 \leq 0$, which is satisfied if $\lambda \geq \max_{\varepsilon_1 \in [-r_l,0)} -\frac{\phi_{v_{ref}}(\varepsilon_1)}{\varepsilon_1}$.\\
		$\bullet$ Case 2: $\varepsilon_1 = 0$. In this case the condition \eqref{eq:psisec} holds for any positive $\lambda$ and therefore for any $\lambda$ satisfying \eqref{eq:lambdamin1}.\\
		$\bullet$ Case 3: $0 \leq \varepsilon_1 < r_l$. In this case the condition \eqref{eq:psisec} holds provided that $\phi_{v_{ref}}(\varepsilon_1) + \lambda \varepsilon_1 \geq 0$, which is satisfied if $\lambda \geq \max_{\varepsilon_1 \in (0,r_l]} -\frac{\phi_{v_{ref}}(\varepsilon_1)}{\varepsilon_1}$.
		
		Then from these three cases, one can guarantee that there exists $\lambda$ satisfying \eqref{eq:lambdamin1} such that \eqref{eq:psisec} holds. To compute the lower bound $\lambda_{min}(r_l)$ as described in \eqref{eq:lambdamin2}, one has to compute the supremum value of the function $g(\varepsilon_1) = - \frac{\phi_{v_{ref}}(\varepsilon_1)}{\varepsilon_1}$ on the interval $(-r_l,0) \cup (0, r_l)$. That corresponds to find the values for which its derivative is equal to zero. Therefore $\frac{\partial g(\varepsilon_1)}{\partial\varepsilon_1} = 0$ leads to $\phi_{v_{ref}}(\varepsilon_1) = \frac{\partial \phi_{v_{ref}}(\varepsilon_1)}{\partial\varepsilon_1} \varepsilon_1$. One can then check that 
		$\lambda_{min}(r_l)$ is defined as $$\lambda_{min}(r_l) = \sup_{\varepsilon_1 \in [-r_l,0) \cup (0, r_l]} -\frac{\phi_{v_{ref}}(\varepsilon_1)}{\varepsilon_1} \geq \lim_{\varepsilon_1 \rightarrow 0} -\frac{\phi_{v_{ref}}(\varepsilon_1)}{\varepsilon_1},$$ 
		with $ \lim_{\varepsilon_1 \rightarrow 0} -\frac{\phi_{v_{ref}}(\varepsilon_1)}{\varepsilon_1}= -\Gamma_{v_{ref}}$,
		by the definition of derivative. 
	\end{proof}
	
	\begin{remark} The previous lemma can be interpreted using the notions of co-coercivity and strongly monotone mapping, for more details, the reader can refer to \cite{bauschke2011convex}.
		\label{rem:coercivemap}
	\end{remark}
	\begin{remark}
		It is important to note that contrarily to the sector-condition for the dead-zone nonlinearity \cite{tarbouriech2011stability} where the slope of the nonlinearity at $0$ is $0$, in the current case the slope of the nonlinearity $\phi_{v_{ref}} (\varepsilon)$ at $0$ is $\Gamma_{v_{ref}} \neq 0$. This is the reason for which in Lemma \ref{lem:psi} there exists $\lambda_{min}(r_l) \neq 0$. 
	\end{remark}
	
	By using Lemma \ref{lem:psi}, the following theorem proposes a solution to Problem \ref{pb:regional}.
	
	\begin{theo} \label{theo:local} Given $v_{ref} \in (0,v_{ref 1}) \cup (v_{ref 2}, \infty)$, if there exist $r_l \in (0, v_{ref})$, $\lambda \in \R^+$, $P_l \in \mathbb{S}^2_+$ and $\tau > 0$ such that the following matrix inequalities hold:
		\begin{equation} \label{eq:LMIlocal}
		\Phi(v_{ref}) \prec 0,
		\end{equation}
		\begin{equation}
		\left[ \begin{array}{cc}
		P_l & \star \\ 
		C  & r_l^2 
		\end{array}\right] \succeq 0,
		\label{eq:inclu}    
		\end{equation}
		\begin{equation}
		\lambda \geq \lambda_{min}(r_l) > 0,
		\label{eq:lambdaMinLMI}    
		\end{equation}
		where 
		\begin{equation}
		\Phi(v_{ref}) = \left[ \begin{array}{cc}
		A_0^{\top} P_l + P_l A_0 - 2 \tau \Gamma_{v_{ref}}(\Gamma_{v_{ref}} + \lambda) C^\top C & \star \\ 
		B^\top P_l  -  \tau (2  \Gamma_{v_{ref}} + \lambda) C & -2 \tau 
		\end{array}\right],
		\end{equation}
		then the origin of system~\eqref{eq:error} is locally asymptotically stable and an estimation of the basin of attraction is:
		\[
		\D(P_l) = \left\{ \varepsilon \in \mathbb{R}^2 \ | \ \varepsilon^{\top} P_l \varepsilon \leq 1 \right\}.
		\]
		Hence, the set $\D(P_l)$ is a solution to Problem \ref{pb:regional}.
	\end{theo}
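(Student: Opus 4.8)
The plan is to take the quadratic candidate $V_l(\varepsilon) = \varepsilon^\top P_l \varepsilon$, work with the reformulated dynamics \eqref{eq:errorlocal2}, and establish that $\dot V_l(\varepsilon) < 0$ for every $\varepsilon \in \D(P_l)\setminus\{0\}$. Since $\D(P_l)$ is a sublevel set of $V_l$, this makes $\D(P_l)$ forward invariant and, by the usual Lyapunov argument, gives local asymptotic stability of the origin together with $\D(P_l)$ as an inner estimate of the basin of attraction.

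First I would check that the analysis region is well posed. For $\varepsilon \in \mathcal{S}(r_l)$ one has $\varepsilon_1 = C\varepsilon \geq -r_l > -v_{ref}$, hence the physical velocity $v = \varepsilon_1 + v_{ref} > v_{ref} - r_l > 0$, so $\Sign(v) = \{1\}$ and both $\phi_{v_{ref}}$ and $\psi_{v_{ref}}$ are genuine smooth functions on $\mathcal{S}(r_l)$; in particular \eqref{eq:errorlocal2} is an ordinary differential equation there, and Lemma~\ref{lem:psi} applies. Next I would prove the geometric inclusion $\D(P_l) \subseteq \mathcal{S}(r_l)$: a Schur complement applied to \eqref{eq:inclu} yields $P_l - r_l^{-2}C^\top C \succeq 0$, so $\varepsilon^\top P_l\varepsilon \leq 1$ implies $(C\varepsilon)^2 \leq r_l^2\,\varepsilon^\top P_l\varepsilon \leq r_l^2$, i.e. $-r_l \leq C\varepsilon \leq r_l$.

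The core computation is then the following: along \eqref{eq:errorlocal2}, $\dot V_l(\varepsilon) = \varepsilon^\top\He(P_l A_0)\varepsilon + 2\varepsilon^\top P_l B\,\psi_{v_{ref}}(\varepsilon_1) = \xi_l^\top \left[\begin{smallmatrix} \He(P_l A_0) & P_l B \\ B^\top P_l & 0 \end{smallmatrix}\right]\xi_l$ with $\xi_l = \col(\varepsilon,\, \psi_{v_{ref}}(\varepsilon_1))$. Expanding the sector inequality \eqref{eq:psisec} of Lemma~\ref{lem:psi} — valid on $\mathcal{S}(r_l)$ because $\lambda \geq \lambda_{min}(r_l)$ — as a quadratic form in $\xi_l$ gives $\xi_l^\top M \xi_l \leq 0$ with $M = \left[\begin{smallmatrix} \Gamma_{v_{ref}}(\Gamma_{v_{ref}}+\lambda)\,C^\top C & \frac12(2\Gamma_{v_{ref}}+\lambda)\,C^\top \\ \frac12(2\Gamma_{v_{ref}}+\lambda)\,C & 1 \end{smallmatrix}\right]$. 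An S-procedure step with multiplier $2\tau > 0$ then shows that negativity of the sum of the $\dot V_l$-matrix above and $-2\tau M$ implies $\dot V_l(\varepsilon) < 0$ whenever \eqref{eq:psisec} holds; a term-by-term identification shows that this combined matrix is exactly $\Phi(v_{ref})$. Hence \eqref{eq:LMIlocal} yields $\dot V_l(\varepsilon) \leq -\alpha\|\varepsilon\|^2$ for some $\alpha>0$ and all $\varepsilon\in\mathcal{S}(r_l)$, in particular on $\D(P_l)$, and the Lyapunov/invariance argument concludes.

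I expect the main work to lie in the algebraic bookkeeping: passing from \eqref{eq:error} to the representation \eqref{eq:errorlocal2} through $A_0$ and the residual nonlinearity $\psi_{v_{ref}} = \phi_{v_{ref}} - \Gamma_{v_{ref}}\varepsilon_1$, verifying that Lemma~\ref{lem:psi} delivers precisely the matrix $M$ above, and matching $\He(\cdot) - 2\tau M$ entry-by-entry with $\Phi(v_{ref})$. The only conceptual point requiring care — not really an obstacle — is that the whole argument lives on $\mathcal{S}(r_l)$, both to stay in the slip (single-valued) regime and to keep the sector bound valid along entire trajectories; this is exactly what the inclusion constraint \eqref{eq:inclu}, guaranteeing $\D(P_l)\subseteq\mathcal{S}(r_l)$, and the forward invariance of the sublevel set $\D(P_l)$ are for.
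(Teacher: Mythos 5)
Your proposal is correct and follows essentially the same route as the paper's proof: Schur complement on \eqref{eq:inclu} to get $\D(P_l)\subseteq\mathcal{S}(r_l)$, the local sector condition of Lemma~\ref{lem:psi} on $\psi_{v_{ref}}$, an S-procedure with multiplier $2\tau$, and identification of the resulting quadratic form in $\col(\varepsilon,\psi_{v_{ref}}(\varepsilon_1))$ with $\Phi(v_{ref})$, followed by the standard invariance argument on the sublevel set. Your explicit check that $\mathcal{S}(r_l)$ keeps the velocity strictly positive (so the dynamics are single-valued there) is a welcome detail that the paper leaves implicit.
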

	
	\begin{proof} Consider the following Lyapunov function:
		\begin{equation}
		V_l(\varepsilon) = \varepsilon^{\top} P_l \varepsilon,
		\end{equation}
		with $P_l \in \S^2_+$. Then, one wants to ensure that
		$\langle \nabla V_l,f \rangle < 0$, for any $\varepsilon \in \D(P_l)$, for any $f \in A_0 \varepsilon(t)  +  B \psi_{v_{ref}}(\varepsilon_1)$ and for all nonlinearities $\psi_{v_{ref}}(\varepsilon_1)$ satisfying Lemma \ref{lem:psi}. 
		Similarly to the proof of Theorem \ref{theo:global}, we denote by language abuse $\langle \nabla V_l,f \rangle$ by $ \dot{V}_l(\varepsilon)$ meaning that we compute $\langle \nabla V_l,f \rangle$ for any solution $f$ to the differential equation \eqref{eq:errorlocal2}.
		
		First, let us observe that the satisfaction of relation \eqref{eq:inclu} is equivalent to $P_l \succeq C^{\top} C r_l^{-2}$ by Schur complement. Consequently for any $\varepsilon \in \D(P_l)$, we get $(C \varepsilon)^2 \leq r_l^2$ which implies that $\varepsilon \in \mathcal{S}(r_l)$ (defined in Lemma \ref{lem:psi}). Therefore, for any $\varepsilon \in \D(P_l)$, Lemma \ref{lem:psi} applies and for any positive $\lambda$ satisfying \eqref{eq:lambdaMinLMI},  condition~\eqref{eq:psisec} holds.
		
		Note that for all solution to \eqref{eq:errorlocal2}, one gets: 
		\begin{equation}
		\dot{V}_l(\varepsilon) = \varepsilon^\top (A_0^\top P_l + P_l A_0) \varepsilon + 2 \varepsilon^\top P_l B \psi_{v_{ref}}(\varepsilon_1).
		\end{equation}
		Thanks to the satisfaction of relation \eqref{eq:inclu}, by using condition~\eqref{eq:psisec} of Lemma \ref{lem:psi}, it follows that for $\tau > 0$ and $\varepsilon \in \D(P_l)$:
		\begin{equation}
		\dot{V}_l(\varepsilon) \leq \dot{V}_l(\varepsilon) - 2 \tau (\psi_{v_{ref}}(\varepsilon_1) + \Gamma_{v_{ref}} C \varepsilon)^\top (\psi_{v_{ref}}(\varepsilon_1) + \Gamma_{v_{ref}} C \varepsilon + \lambda C \varepsilon).
		\end{equation}
		Hence, to ensure that $\dot{V}_l(\varepsilon) < 0$, it suffices to ensure  $\mathcal{L} = \dot{V}_l(\varepsilon) - 2 \tau (\psi_{v_{ref}}(\varepsilon_1) + \Gamma_{v_{ref}} C \varepsilon)^\top (\psi_{v_{ref}}(\varepsilon_1) + \Gamma_{v_{ref}} C \varepsilon + \lambda C \varepsilon) < 0$. $\mathcal{L}$ reads $\mathcal{L} \in \xi^\top \Phi(v_{ref}) \xi$
		where $\xi = \displaystyle \col \left( \varepsilon, \psi_{v_{ref}}(\varepsilon_1) \right)$.
		
		Hence, if relation \eqref{eq:LMIlocal} holds, it follows that $\mathcal{L} < 0$ and therefore $\dot{V}_l$ is negative definite. 
		If the conditions of the theorem are verified, then for any initial condition $\varepsilon(0) \in \D(P_l)$, we get that $\varepsilon(t) \in \D(P_l)$ and therefore the trajectories of system~\eqref{eq:error} converge to the origin. One can conclude that the origin of system \eqref{eq:errorlocal2} is regionally asymptotically stable in $\D(P_l)$, or equivalently, \eqref{eq:error} is regionally asymptotically stable in $\D(P_l)$.\end{proof}
	
	\begin{remark}
		From Lemma \ref{lem:psi}, one gets  $\lambda > - \Gamma_{v_{ref}}$. Then, one can deduce that a necessary condition for the feasibility of \eqref{eq:LMIlocal} is that $A_0$ is Hurwitz, which holds since we consider $v_{ref} \in (0,v_{ref 1}) \cup (v_{ref 2}, \infty)$ (see Lemma \ref{lem:A0}).
		\label{rem:lambdaGamma}
	\end{remark}
	
	Regarding the feasibility of conditions \eqref{eq:LMIlocal}-\eqref{eq:lambdaMinLMI}, the following corollary can be stated.
	\begin{coro} \label{coro:existence} 
		\phantom{a}
		\begin{enumerate}
			\item Given $v^{\circ} \geq v_{ref2}$, if there exist $\lambda$, $r_l \in (0, v^{\circ})$, $P_l \in \mathbb{S}^2_+$ and $\tau > 0$ such that $\Phi(v^{\circ}) \prec 0$ and $\Phi(\infty) \prec 0$ together with \eqref{eq:inclu}-\eqref{eq:lambdaMinLMI}, then system~\eqref{eq:error} is locally asymptotically stable for any $v_{ref} \geq v^{\circ}$ with an inner-estimation of the basin of attraction $\D(P_l)$.
			\item If $A$, defined in \eqref{eq:A}, is Hurwitz, the matrix inequalities \eqref{eq:LMIlocal}-\eqref{eq:lambdaMinLMI} are feasible.
		\end{enumerate}
	\end{coro}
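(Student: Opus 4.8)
The plan is to prove both items by isolating the $v_{ref}$-independent content of \eqref{eq:LMIlocal} and then keeping track of the remaining constraints.

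For the first item, the key step is to observe that, for fixed $P_l$, $\tau$ and $\lambda$, the matrix $\Phi(v_{ref})$ depends on $v_{ref}$ only through the scalar $\Gamma_{v_{ref}}$, and that this dependence is realized by a nonsingular congruence. Using $\psi_{v_{ref}}(\varepsilon_1)+\Gamma_{v_{ref}}\varepsilon_1=\phi_{v_{ref}}(\varepsilon_1)$ and $A_0\varepsilon+B\psi_{v_{ref}}(\varepsilon_1)=A\varepsilon+B\phi_{v_{ref}}(\varepsilon_1)$, one rewrites the quantity $\mathcal{L}$ of the proof of Theorem~\ref{theo:local} in the variable $\col(\varepsilon,\phi_{v_{ref}}(\varepsilon_1))$ and gets
\[
\Phi(v_{ref}) = T(\Gamma_{v_{ref}})^{\top}\,\Phi(\infty)\,T(\Gamma_{v_{ref}}),\qquad T(\gamma)=\left[\begin{matrix} I_2 & 0 \\ \gamma C & 1\end{matrix}\right],
\]
where $\Phi(\infty)$ is $\Phi$ with $\Gamma_{v_{ref}}$ set to $0$. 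Since $T(\gamma)$ is invertible for every $\gamma$, $\Phi(\infty)\prec0$ already forces $\Phi(v_{ref})\prec0$ for \emph{every} $v_{ref}$ (so the two hypotheses $\Phi(v^{\circ})\prec0$ and $\Phi(\infty)\prec0$ are in fact equivalent). As \eqref{eq:inclu} contains no $v_{ref}$ and $r_l\in(0,v^{\circ})\subseteq(0,v_{ref})$ for $v_{ref}\ge v^{\circ}$, only \eqref{eq:lambdaMinLMI} remains to be propagated: here I would use that $v_{ref}\mapsto\Gamma_{v_{ref}}$ is monotone on $[v^{\circ},\infty)\subseteq(v_s/\sqrt2,\infty)$ (it equals $-2R_N(\mu_S-\mu_C)v_s^{-2}$ times $v_{ref}e^{-v_{ref}^2/v_s^2}$, which decreases past $v_s/\sqrt2$) and, correspondingly, that $\lambda_{min}(r_l)$ as in \eqref{eq:lambdamin2}, built from $\phi_{v_{ref}}$, is non-increasing in $v_{ref}$ on that range, so that $\lambda\ge\lambda_{min}(r_l)$ at $v^{\circ}$ carries over to all $v_{ref}\ge v^{\circ}$. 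One also notes that $\Phi(v^{\circ})\prec0$ forces $A_0$ to be Hurwitz (Remark~\ref{rem:lambdaGamma}), hence $v^{\circ}>v_{ref2}$ and $v_{ref}\in(v_{ref2},\infty)$. Then Theorem~\ref{theo:local} applies for each such $v_{ref}$ and gives local asymptotic stability of the origin with $\D(P_l)$ as estimate of the basin of attraction.

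For the second item, I would prove feasibility by construction. By the congruence above, \eqref{eq:LMIlocal} is equivalent to the $v_{ref}$-free inequality
\[
\Phi(\infty)=\left[\begin{matrix} A^{\top}P_l+P_lA & P_lB-\tau\lambda C^{\top} \\ \star & -2\tau\end{matrix}\right]\prec0,
\]
which, by Schur complement on the $(2,2)$ block and the identity $BB^{\top}=m^{-2}C^{\top}C$, amounts to strict positive realness of $\tau\bigl(1+\lambda\,C(sI-A)^{-1}B\bigr)$. Since $A$ is Hurwitz (the hypothesis of item~2, which holds under Assumption~\ref{ass2}) and $\operatorname{Re}\bigl[C(j\omega I-A)^{-1}B\bigr]\le0$ for all $\omega$ (a direct computation on the $2\times 2$ realization), this holds for some $P_l\succ0$, $\tau>0$ precisely when $\lambda<k_v$; and the requested $\lambda$ can be taken in $[\lambda_{min}(r_l),k_v)$ because $\lambda_{min}(r_l)\to-\Gamma_{v_{ref}}$ as $r_l\to0^{+}$ while $-\Gamma_{v_{ref}}<k_v$ (equivalently, $A_0$ Hurwitz, which holds in the setting of Theorem~\ref{theo:local} by Lemma~\ref{lem:A0}), so one first picks $r_l\in(0,v_{ref})$ small enough that $\lambda_{min}(r_l)<k_v$. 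Finally, since $\Phi(\infty)\prec0$ and \eqref{eq:lambdaMinLMI} are invariant under $(P_l,\tau)\mapsto(\alpha P_l,\alpha\tau)$, scaling the resulting pair by $\alpha$ large enough also yields $\alpha P_l\succeq r_l^{-2}C^{\top}C$, i.e.\ \eqref{eq:inclu} by Schur complement, which completes the feasibility argument.

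The step I expect to be the main obstacle is the propagation of \eqref{eq:lambdaMinLMI} in item~1 (equivalently, controlling the admissible range of $\lambda$ in item~2): one must show that $v_{ref}\mapsto\lambda_{min}(r_l)$ is non-increasing, which is immediate when the window $[v_{ref}-r_l,\,v_{ref}+r_l]$ lies entirely to the right of the inflection point $v_s/\sqrt2$ — there $e^{-v^2/v_s^2}$ is convex decreasing and the supremum in \eqref{eq:lambdamin2} is attained at $\varepsilon_1=-r_l$ — but requires a short additional argument when that window straddles $v_s/\sqrt2$. By contrast, once the congruence $\Phi(v_{ref})=T(\Gamma_{v_{ref}})^{\top}\Phi(\infty)T(\Gamma_{v_{ref}})$ is noticed, the $\Phi(v_{ref})\prec0$ part of both items is essentially immediate.
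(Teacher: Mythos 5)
Your proof is correct in substance but follows a genuinely different route from the paper's. For item~1, the paper expands $\Phi$ as a quadratic polynomial in $\Gamma_{v_{ref}}$ (equation \eqref{eq:defMfeasibility}) and invokes convexity to reduce the check to the two ``endpoints'' $v^{\circ}$ and $\infty$; you instead exhibit the exact congruence $\Phi(v_{ref})=T(\Gamma_{v_{ref}})^{\top}\Phi(\infty)T(\Gamma_{v_{ref}})$ with $T(\gamma)=\left[\begin{smallmatrix} I_2 & 0\\ \gamma C & 1\end{smallmatrix}\right]$, which I have verified blockwise (it is exactly the change of variable $\psi_{v_{ref}}\mapsto\phi_{v_{ref}}$ in the quadratic form $\mathcal{L}$). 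This is a sharper observation than the paper's: since the quadratic coefficient in \eqref{eq:defMfeasibility} is $-2\tau C^{\top}C\preceq 0$, the map $\Gamma\mapsto\xi^{\top}\Phi\xi$ is in fact concave, so an endpoint check alone would not suffice; your congruence is what actually delivers the conclusion, and it shows moreover that $\Phi(v^{\circ})\prec 0$ and $\Phi(\infty)\prec 0$ are equivalent, so that all of the genuine $v_{ref}$-dependence of the hypotheses sits in \eqref{eq:lambdaMinLMI}. For item~2, the paper applies the Elimination Lemma twice to reduce $\Phi(\infty)\prec 0$ to $A^{\top}P_l+P_lA\prec 0$; you use a Schur complement plus a KYP/frequency-domain computation, which buys the explicit admissible range $\lambda<k_v$ and lets you check compatibility with $\lambda\geq\lambda_{min}(r_l)$ via $\lambda_{min}(r_l)\to-\Gamma_{v_{ref}}<k_v$ (Lemma \ref{lem:A0}) --- a compatibility that the paper's elimination argument leaves implicit, since eliminating the block containing $\tau\lambda C$ does not by itself guarantee that the reconstructed $\lambda$ satisfies \eqref{eq:lambdaMinLMI}. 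The final scaling $(P_l,\tau)\mapsto(\alpha P_l,\alpha\tau)$ to secure \eqref{eq:inclu} coincides with the paper's. The one loose end you correctly flag --- that in item~1 the bound $\lambda_{min}(r_l)$ is computed from $\phi_{v_{ref}}$ and must be shown non-increasing in $v_{ref}$ on $[v^{\circ},\infty)$ for \eqref{eq:lambdaMinLMI} to propagate --- is not addressed by the paper's proof either; your secant-slope/convexity-of-$F_{nl}$ argument handles the case $v_{ref}-r_l>v_s/\sqrt{2}$ and the remaining case does deserve the short additional argument you mention, but this does not invalidate your approach.
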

	\begin{proof} \phantom{a}
		
		\textbf{First item:} Let first note that the left-hand side in relation \eqref{eq:LMIlocal} can be rewritten as follows:
		\begin{equation}
		\begin{array}{lcl}
		\Phi(v_{ref}) & = & \left[ \begin{array}{cc}
		A^{\top} P_l + P_l A & \star \\ 
		B^\top P_l - \tau \lambda C & -2 \tau 
		\end{array}\right] + \Gamma_{v_{ref}} 
		\left[ \begin{array}{cc}
		\He(P_l B C) 
		- 2 \tau \lambda C^\top C & \star \\ 
		- 2 \tau C & 0  
		\end{array}\right] \\
		& &
		+ \Gamma_{v_{ref}}^2 
		\left[ \begin{array}{cc}
		- 2 \tau C^\top C & \star \\ 
		0 & 0  
		\end{array}\right]
		\end{array}
		\label{eq:defMfeasibility}
		\end{equation}
		That shows that $\Phi$ is convex in $\Gamma_{v_{ref}}$. Furthermore, note that $\Gamma_{v_{ref}}$ is strictly decreasing with respect to $v_{ref}$ and consequently, $\Phi(v_{ref})$ is convex in $v_{ref}$ for $v_{ref} > v_{ref2}$ and that ends the proof for the first item.
		
		\textbf{Second item:} The proof of the second item takes advantage of the proof of the first one. Given $\lambda, r_l$ such that \eqref{eq:lambdaMinLMI} holds, from \eqref{eq:defMfeasibility}, it follows:
		\begin{equation}
		\Phi(v_{ref}) = \Phi(\infty) + \Gamma_{v_{ref}} 
		\He\left(\left[ \begin{smallmatrix}
		P_l B - \tau (\Gamma_{v_{ref}} + \lambda) C^\top\\ 
		- \tau    
		\end{smallmatrix}\right]  \left[\begin{smallmatrix}
		C & 0   
		\end{smallmatrix}\right]\right)
		\label{eq:defMfeasibility2}
		\end{equation}
		Denote by $\mathcal{N}_1$ and $\mathcal{N}_2$ the basis of the Kernel of $\left[\begin{smallmatrix}
		C & 0   
		\end{smallmatrix}\right]$ and $\left[\begin{smallmatrix}
		B^\top P_l  - \tau (\Gamma_{v_{ref}} + \lambda) C & 
		- \tau    
		\end{smallmatrix}\right]$, respectively.
		By using the Elimination Lemma \cite{LMI}, the satisfaction of $\Phi(v_{ref}) \prec 0$ is equivalent from \eqref{eq:defMfeasibility2} to:
		\begin{equation}
		\begin{array}{ccc}
		\mathcal{N}_1^\top \Phi(\infty) \mathcal{N}_1 \prec 0, & \quad &
		\mathcal{N}_2^\top \Phi(\infty) \mathcal{N}_2 \prec 0.
		\end{array}
		\label{eq:defMfeasibility3}
		\end{equation} 
		Furthermore, note that $\Phi(\infty)$ can be written as follows:
		\begin{equation}
		\Phi(\infty) =  \left[ \begin{array}{cc}
		A^{\top} P_l + P_l A & 0 \\ 
		0 & 0 
		\end{array}\right] + \He\left(\left[ \begin{smallmatrix}
		P_l B - \tau \lambda C^\top\\ 
		- \tau    
		\end{smallmatrix}\right]  \left[\begin{smallmatrix}
		0 & I   
		\end{smallmatrix}\right]\right)
		\label{eq:defMfeasibility4}
		\end{equation}
		By using the Elimination Lemma \cite{LMI}, the satisfaction of $\Phi(\infty) \prec 0$ is equivalent to $A^{\top} P_l + P_l A \prec 0$. Hence, if $A$ is Hurwitz, then there exists $P_l$ such that $\Phi(\infty) \prec 0$ and consequently,  \eqref{eq:defMfeasibility3} holds. Since $k \cdot P_l$ for $k \geq 1$ is also solution, there exists $k$ large enough such that \eqref{eq:inclu} holds. Therefore relations \eqref{eq:LMIlocal}-\eqref{eq:lambdaMinLMI} also hold and the proof is completed.
	\end{proof}
	
	\begin{remark}
		The first assertion of Corollary~\ref{coro:existence} implies that the basins of attraction have a minimal axis which is growing with $v_{ref}$.
	\end{remark}
	
	
	
	
	
	At this point, the aim is to derive an inner-estimate of the basin around the origin which is close to the real basin. To this extent, one can decide to solve the following optimization problem:
	\begin{equation} \label{eq:optBasin}
	\begin{array}{cc}
	\min & \eta \\
	\text{subject to} & \eqref{eq:LMIlocal}-\eqref{eq:lambdaMinLMI} \text{ hold with } P_l \preceq \eta I_2.
	\end{array}
	\end{equation}
	If $r_l$ is a decision variable, this is not a semi-definite optimization problem since $\lambda_{min}$ is nonlinearly related to $r_l$. Indeed, the relation \eqref{eq:LMIlocal} is an LMI provided that $\tau$ or $\lambda$ is fixed. Moreover $\lambda$ must satisfy condition \eqref{eq:lambdaMinLMI}, which depends on $r_l$. A way to have a set of LMI for \eqref{eq:LMIlocal} consists in fixing $r_l$, allowing to compute $\lambda_{min}(r_l)$, and to consider an additional variable $\gamma = \tau \lambda$ with the constraints $\gamma > \tau \lambda_{min}(r_l)$. \\
	From this remark, we derive the following algorithm which aims at providing a sub-optimal solution\footnote{The code of this algorithm is available at \url{https://github.com/mBarreau/MassSpringSystem}.}:
	\begin{enumerate}
		\item Let $r_l = v_{ref}$, $\eta_{min} = \infty$ and $P_{min} = 0$;
		\item Compute $\lambda_{min}(r_l)$ using
		\eqref{eq:lambdamin2};
		\item Solve \eqref{eq:optBasin} using the previous paragraph:
		\begin{itemize}
			\item if the problem is feasible, then 
			\begin{itemize}
				\item if $\eta < \eta_{min}$, then
				\begin{enumerate}
					\item $\eta_{min} \leftarrow \eta$ and $P_{min} \leftarrow P_l$;
					\item use a dichotomy algorithm to decrease $r_l$;
					\item go back to step 2;
				\end{enumerate}
				\item otherwise stop.
			\end{itemize}
			\item if the problem is not feasible, then
			\begin{enumerate}
				\item decrease slightly $r_l$;
				\item go back to step 2;
			\end{enumerate}
		\end{itemize}
	\end{enumerate}
	Due to the second assertion in Corollary~\ref{coro:existence}, this algorithm will stop for $r_l$ small enough. The result of this algorithm is a matrix $P_{min}$ such that $\D(P_{min})$ is a basin of attraction with a minimal axis which is a local maximum.
	
	Combining the results of the previous parts leads to the main theorem of this article which is derived in the following section.
	
	\section{Global Asymptotic Stability Analysis}
	\label{sec:GAS}
	
	The main idea here is to use at the same time Theorems~\ref{theo:local} and \ref{theo:global}to get a condition on $v_{ref}$ to guarantee that the origin is globally asymptotically stable for system \eqref{eq:error}, and therefore to provide a solution to Problem~\ref{pb:main}.
	
	\begin{theo} \label{theo:GAS}
		Assume $v_{ref} \in (0,v_{ref 1}) \cup (v_{ref 2}, \infty)$ and $A$ (defined in equation~\eqref{eq:A}) Hurwitz. If there exist $\lambda$, $r_l \in (0, v_{ref})$, $\tau$, $\tau_0$, $\tau_1$, $\tau_2$, $\tau_3$, $\tau_4$, $\tau_5$ and $P_l, P_g \succ 0$ satisfying relations \eqref{eq:LMIglobal1}, \eqref{eq:LMIglobal2}, \eqref{eq:LMIlocal}, \eqref{eq:inclu} and
		\begin{equation}
		P_g - P_l \succeq 0
		\label{eq:inclu2}
		\end{equation}
		then $\A(P_g) \subseteq \D(P_l)$ and 
		the origin of \eqref{eq:error} is globally asymptotically stable.
	\end{theo}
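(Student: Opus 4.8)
The plan is to glue together the two intermediate results already established. On the one hand, relations \eqref{eq:LMIglobal1}--\eqref{eq:LMIglobal2} are exactly the hypotheses of Theorem~\ref{theo:global}, so every trajectory of \eqref{eq:error} converges to the ellipsoid $\A(P_g)$. On the other hand, relations \eqref{eq:LMIlocal}, \eqref{eq:inclu} (together with the admissibility \eqref{eq:lambdaMinLMI} built into the choice of $\lambda$, and $v_{ref}\in(0,v_{ref 1})\cup(v_{ref 2},\infty)$ ensuring $A_0$ Hurwitz by Lemma~\ref{lem:A0}) are the hypotheses of Theorem~\ref{theo:local}, so the origin is locally asymptotically stable and $\D(P_l)$ is a forward-invariant inner-estimate of its basin of attraction; $A$ Hurwitz keeps the combined conditions consistent via Corollary~\ref{coro:existence}. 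The only genuinely new ingredients are the geometric inclusion forced by \eqref{eq:inclu2} and the argument that chaining the two conclusions upgrades them to a global statement.

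First I would prove $\overline{\A(P_g)}\subseteq\D(P_l)$. Since $\overline{\A(P_g)}=\{\varepsilon\in\R^2:\varepsilon^\top P_g\varepsilon\le 1\}$ and $\D(P_l)=\{\varepsilon\in\R^2:\varepsilon^\top P_l\varepsilon\le 1\}$, relation \eqref{eq:inclu2} gives $\varepsilon^\top P_l\varepsilon\le\varepsilon^\top P_g\varepsilon$ for every $\varepsilon$, hence $\varepsilon^\top P_g\varepsilon\le 1$ forces $\varepsilon^\top P_l\varepsilon\le 1$. In particular $\A(P_g)\subseteq\D(P_l)$, which is the first claim of the theorem.

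Next I would run the convergence argument. Fix an arbitrary $\varepsilon(0)\in\R^2$; the solution is well defined and, by the estimate $\dot V_g(\varepsilon)\le-\alpha\|\varepsilon\|^2$ for $\varepsilon\notin\A(P_g)$ obtained in the proof of Theorem~\ref{theo:global}, it is bounded (the compact sublevel sets $\{V_g\le c\}$ with $c\ge 1$ are forward invariant, $V_g=\varepsilon^\top P_g\varepsilon$). The same estimate yields $\dot V_g(\varepsilon)\le-\alpha'V_g(\varepsilon)$ with $\alpha'=\alpha/\lambda_{\max}(P_g)>0$ whenever $V_g(\varepsilon)\ge 1$, so $V_g$ decreases at a rate bounded away from zero off $\A(P_g)$ and the trajectory reaches $\{V_g=1\}\subseteq\overline{\A(P_g)}$ in finite time $t^\star$. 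By the inclusion above, $\varepsilon(t^\star)\in\D(P_l)$, and from $t^\star$ onward Theorem~\ref{theo:local} takes over: $\D(P_l)$ is forward invariant and $\varepsilon(t)\to 0$ as $t\to\infty$. Since $\varepsilon(0)$ was arbitrary, the origin is globally attractive, and since Theorem~\ref{theo:local} already provides Lyapunov stability of the origin, the origin of \eqref{eq:error} is globally asymptotically stable.

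The algebraic step $P_g\succeq P_l\Rightarrow\A(P_g)\subseteq\D(P_l)$ is immediate; the point deserving care is the bridge from the purely asymptotic conclusion of Theorem~\ref{theo:global} (``trajectories converge to $\A(P_g)$'') to the finite-time entry into $\D(P_l)$ needed to invoke Theorem~\ref{theo:local}. This is why I would make explicit the uniform-rate strict-decrease estimate for $V_g$ outside the \emph{open} set $\A(P_g)$ and work with the \emph{closed} set $\overline{\A(P_g)}$, which \eqref{eq:inclu2} still places inside $\D(P_l)$; boundedness and global existence of solutions up to $t^\star$ follow from the same monotonicity. No further obstacle is anticipated.
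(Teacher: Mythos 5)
Your proposal is correct and follows essentially the same route as the paper: chain Theorem~\ref{theo:global} (global convergence to $\A(P_g)$) with Theorem~\ref{theo:local} (regional asymptotic stability on $\D(P_l)$) through the inclusion forced by \eqref{eq:inclu2}. Your treatment is in fact slightly more careful than the paper's, which asserts without elaboration that trajectories ``enter'' $\D(P_l)$, whereas you justify the finite-time entry via the uniform decrease rate of $V_g$ outside the open set $\A(P_g)$ and the inclusion of the closed set $\overline{\A(P_g)}$ in $\D(P_l)$.
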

	
	\begin{proof} Assuming $v_{ref} \in (0,v_{ref 1}) \cup (v_{ref 2}, \infty)$ and $A$  Hurwitz implies that there exist $P_g, P_l \succ 0$ such that the relations \eqref{eq:LMIglobal1}, \eqref{eq:LMIglobal2}, \eqref{eq:LMIlocal}, \eqref{eq:inclu} are feasible. Consequently, Theorems \ref{theo:local} and \ref{theo:global} apply. It allows to characterize an attractor $\A(P_g)$ towards which all the trajectories of system \eqref{eq:error} converge and an estimation of the basin of attraction of the origin $\D(P_l)$, from which the trajectories of \eqref{eq:error} converges to the origin. Condition \eqref{eq:inclu2} guarantees the inclusion $\A(P_g) \subseteq \D(P_l)$. Then by definition of both sets, one can conclude that when the trajectories converging towards $\A(P_g)$ enter in $\D(P_l)$, they finally converge to the origin. That concludes the proof on the global asymptotic stability of the origin. \end{proof}
	
	\section{Numerical Simulations}
	\label{sec:ex}
	
	This section is dedicated to numerical results with the parameters defined as in Table~\ref{tab:parameters}. 
	
	\begin{table}[!hhh]
		\centering
		\begin{tabular}{c|ccccccccc}
			Parameter & 
			$m$ & $g$ & $v_s$ & $\mu_C$ & $\mu_S$ & $k$ & $k_v$ & $\ell_0$ & $x_A(0)$ \\ \hline
			Value & $1$ &  $9.81$ & $0.8$ & $0.2997$ &  $0.5994$ & $2$ & $1$ & $0$ & $0$
		\end{tabular}
		\caption{Parameters used for the simulations (taken from \cite{johanastrom2008revisiting}).}
		\label{tab:parameters}
	\end{table}
	
	
	The solver used for the LMIs is Mosek \cite{mosek} together with YALMIP \cite{1393890}. The simulations are conducted using a mixed Newton-backward, Newton-forward discretization scheme adapted for set-valued functions as proposed in \cite{acary2008numerical}. This adaptation prevents numerical chattering and is available at \url{https://github.com/mBarreau/MassSpringSystem}.
	\begin{figure}
		\centering
		\includegraphics[width=8cm]{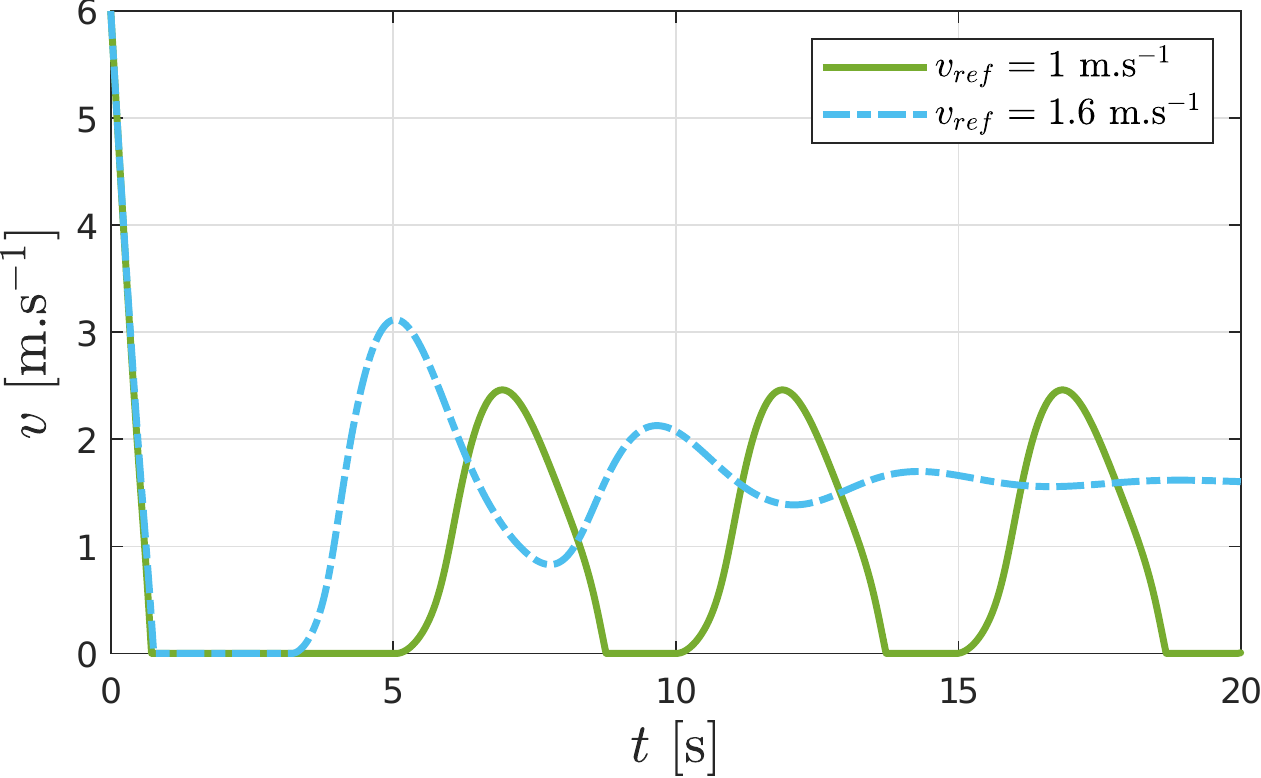}
		\caption{Numerical simulation of \eqref{eq:stateSpace} for $v_{ref} \in \{1, 1.6\}$ m.s${}^{-1}$, $v_0 = 6$ m.s${}^{-1}$ and $z_0 = 0$ m.}
		\label{fig:simu}
	\end{figure}
	
	Figure~\ref{fig:simu} depicts a numerical simulation of \eqref{eq:stateSpace} with  the parameters in Table~\ref{tab:parameters} for two different reference speeds. The stick-slip phenomenon is clearly visible since after five seconds for $v_{ref} = 1$ m.s${}^{-1}$, a cycle of period $5s$ and of amplitude $2.4$m.s${}^{-1}$ is emerging. For a reference speed higher than $1.45$ m.s${}^{-1}$, it seems that the stick-slip phenomenon disappears. In \cite{canudasdewit:hal-00394990}, the authors are using a describing function analysis on a similar system, in our case, we would get that for $v_{ref} \geq 1.51$ m.s${}^{-1}$ there does not exist any limit cycle. This seems a very good estimation but it is not a rigorous analysis.
	
	\begin{figure*}
		\centering
		\begin{subfigure}[t]{0.45\linewidth}
			\includegraphics[width=\textwidth]{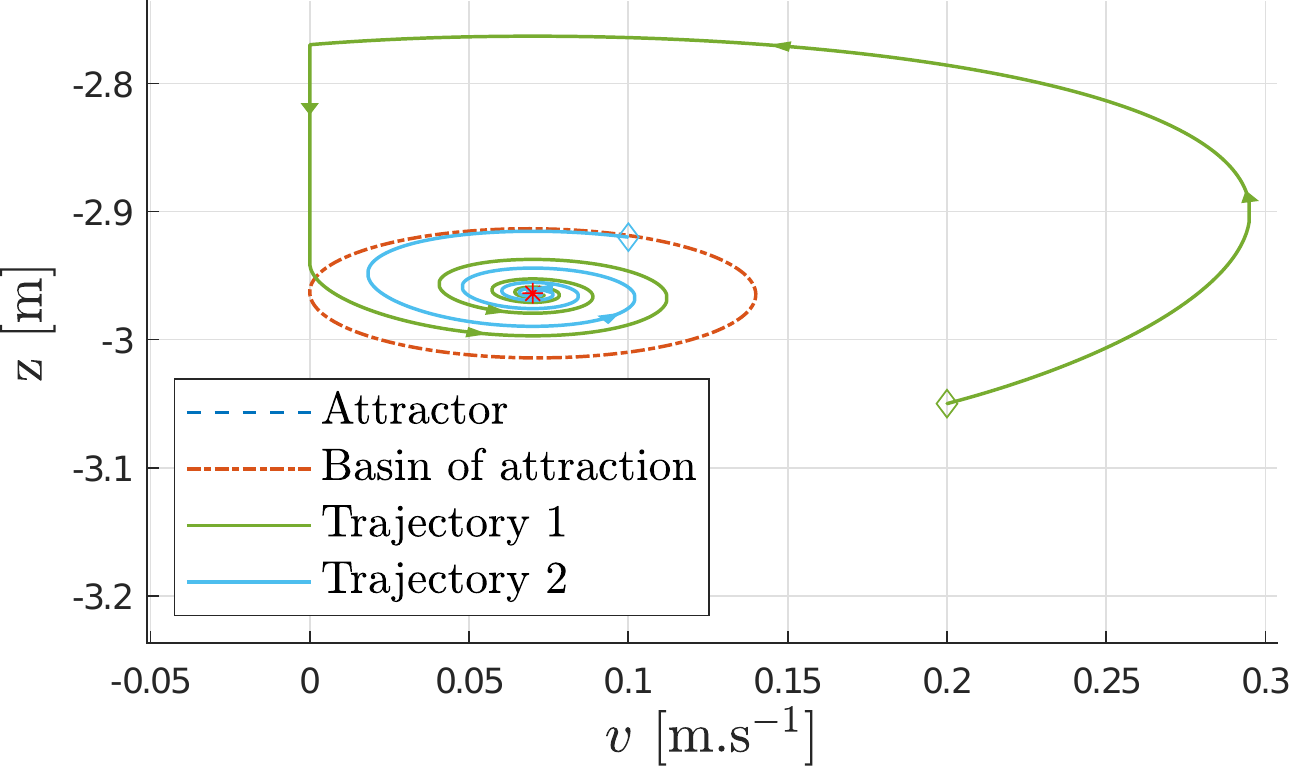}
			\caption{$v_{ref} = 0.07$ m.s${}^{-1}$}
			\label{fig:phasePlanevref007}
		\end{subfigure} \
		\begin{subfigure}[t]{0.45\linewidth}
			\includegraphics[width=\textwidth]{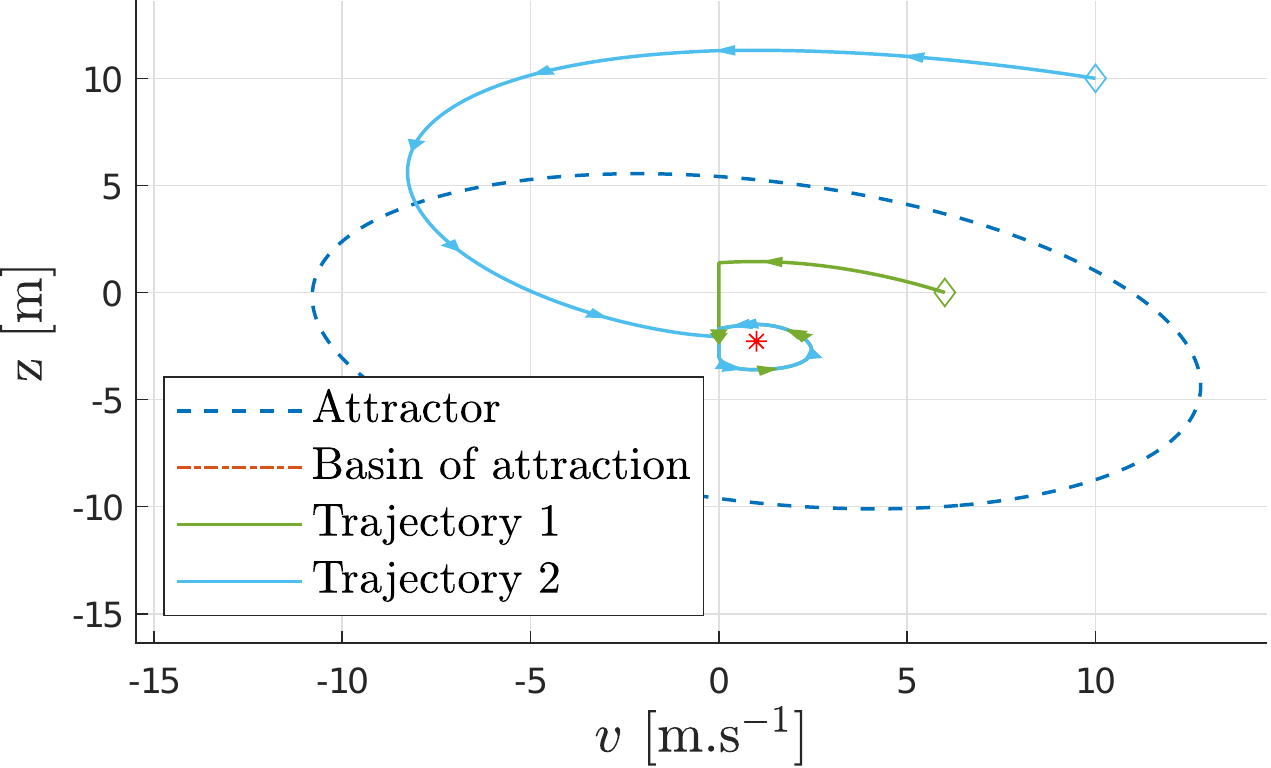}
			\caption{$v_{ref} = 1$ m.s${}^{-1}$}
			\label{fig:phasePlanevref1}
		\end{subfigure} \\
		\begin{subfigure}[t]{0.45\linewidth}
			\includegraphics[width=\textwidth]{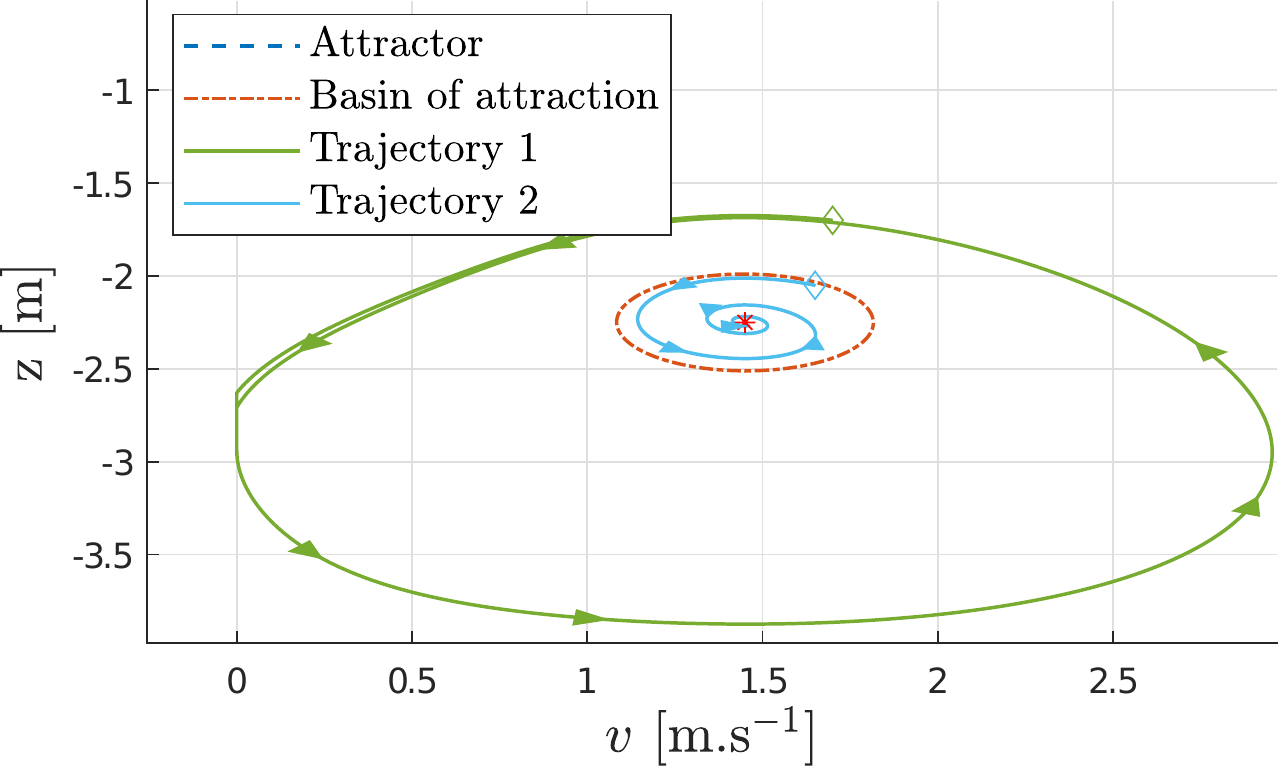}
			\caption{$v_{ref} = 1.45$ m.s${}^{-1}$}
			\label{fig:phasePlanevref1.45}
		\end{subfigure} \
		\begin{subfigure}[t]{0.45\linewidth}
			\includegraphics[width=\textwidth]{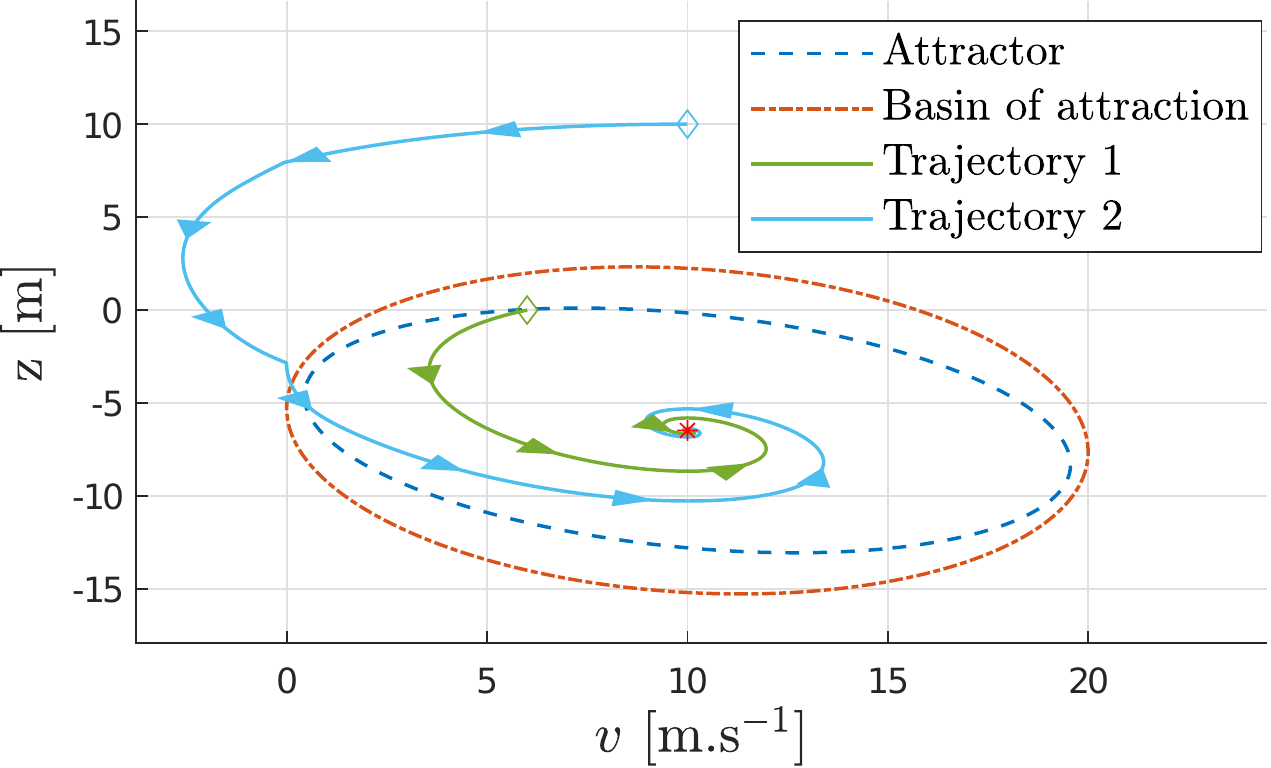}
			\caption{$v_{ref} = 10$ m.s${}^{-1}$}
			\label{fig:phasePlanevref10}
		\end{subfigure} \\
		\caption{Phase portraits of \eqref{eq:stateSpace} for four values of $v_{ref}$. The inner-approximation of $\A$ is in dash-blue and the outer estimation of $\D$ is in dash-dot-red. The equilibrium point is the red star and the two trajectories initiate from the diamond points.}
		\label{fig:phasePlan}
	\end{figure*}
	
	Using the methodology developed in this paper, one can compute the numerical values defining the interval of admissible $v_{ref}$, that is: $v_{ref 1} = 0.11$ m.s${}^{-1}$ and $v_{ref 2} = 1.25$ m.s${}^{-1}$. Then, for $v_{ref} \in (0,v_{ref 1}) \cup (v_{ref 2}, \infty)$, there exists a basin of attraction around the origin. Using Theorem~\ref{theo:GAS}, we find that for $v_{ref} \geq 9.59$ m.s${}^{-1}$, the equilibrium point is globally asymptotically stable\footnote{Using the method of \cite{canudasdewit:hal-00394990}, one gets a smaller value for global asymptotic stability. Nevertheless, it does not estimate the basin of attraction and the attractor.}. 
	As noted before, there always exists an attractor for the system but the existence or not of a basin of attraction leads to four scenarios:
	\begin{enumerate}
		\item $v_{ref} \in (0, 0.11)$ m.s${}^{-1}$: there exists a basin of attraction around the equilibrium point;
		\item $v_{ref} \in [0.11, 1.25]$ m.s${}^{-1}$: the equilibrium point is not asymptotically stable;
		\item $v_{ref} \in (1.25, 9.59)$ m.s${}^{-1}$: there exists a basin of attraction which does not include the attractor; 
		\item $v_{ref} \geq 9.59$ m.s${}^{-1}$: the basin of attraction contains the attractor and the equilibrium point is globally asymptotically stable.
	\end{enumerate}
	
	Figure~\ref{fig:phasePlan} shows the result of simulations in these four cases. The attractor\footnote{Sometimes, the attractor is not displayed since it is not of the same order of magnitude as the basin of attraction.} is computed with Theorem~\ref{theo:global} and the basin of attraction with Theorem~\ref{theo:local}. \\
	In Figure~\ref{fig:phasePlanevref007}, one can see that the trajectories are converging even if they start far away from the basin of attraction. Simulations tend to show that the equilibrium point is globally exponentially stable but our analysis does not reflect that. \\
	In the second case (Figure~\ref{fig:phasePlanevref1}), there does not exist a basin of attraction. The attractor contains the oscillations but is very large compared to the real oscillations. The conservatism might come from the use of quadratic Lyapunov functions and a relatively rude encapsulation of the nonlinear friction term (see Figure~\ref{fig:relay}).\\
	Figure~\ref{fig:phasePlanevref1.45} shows that the inner-approximation of the basin of attraction is good and for a trajectory initiated close but outside this estimation, the stick-slip phenomenon occurs. The equilibrium point is not globally asymptotically stable but it stays locally asymptotically stable.\\
	Finally, in the last case (Figure~\ref{fig:phasePlanevref10}), the attractor is included in the inner-approximation of the basin of attraction and the equilibrium point is globally asymptotically stable. 
	
	
	\section{Conclusion}
	\label{sec:conclu}
	
	This paper proposes three theorems dealing with the stability of a system subject to friction. The three proposed methods give a characterization of the global attractor of the system, an estimation of the basin of attraction of the origin when it is possible, and finally a global asymptotic stability condition. The conditions are given in terms of LMIs or quasi-LMIs\footnote{To be understood in the sense that the nonlinear term is issued from a product between a scalar and a matrix.} with efficient algorithms to solve them. 
	
	As depicted in the numerical simulations section, the reference speed for which global asymptotic stability exists is overestimated mainly because of the conservatism introduced in the global stability test. The first direction of research would be to estimate an attractor that might not be symmetrical and to study more precisely the friction function $F_{nl}$ to get a better bounding. In this context, one idea could be to use the dissipated energy by friction,  which is always positive but not smooth, to design a Lyapunov function. Moreover, taking inspiration from the techniques proposed in \cite{leine2007stability}, \cite{brogliato2007dissipative} the notion of dissipativity could be interesting when dealing not only with the global attractor but also with the regional stability.  Another perspective would be to extend to higher-order problems (by considering a PID controller for instance) and real-life applications.
	
	\bibliography{mybibfile}
	
\end{document}